 \renewcommand{\div}{\mathop{\mathrm{div}}\nolimits}
\numberwithin{equation}{section}
\newtheorem{thm}{Theorem}[section]
\newtheorem{dfn}{Definition}[section]
\newtheorem{lemma}{Lemma}[section]
\newtheorem{remark}{Remark}[section]
\newtheorem{cor}{Corollary}[section]
\def\eps{\varepsilon}
\def\R{{\mathbb R} }
\def\r{\R^{n+1}_{+}}
\def\br{\partial\r}
\begin{document}
\author{Mostafa Fazly}

\address{Department of Mathematical and Statistical Sciences, CAB 632, University of Alberta, Edmonton, Alberta, Canada T6G 2G1}

\email{fazly@ualberta.ca}

\author{Juncheng Wei}
\address{Department of Mathematics, University of British Columbia, Vancouver, B.C. Canada V6T 1Z2.}
\email{jcwei@math.ubc.ca}

\thanks{Both authors are supported by  Natural Sciences and Engineering Research Council of Canada (NSERC) grants. We thank Pacific Institute for the Mathematical Sciences (PIMS) for hospitality.}

\title{On finite Morse index solutions of higher order fractional Lane-Emden equations}
\maketitle

\vspace{3mm}

\begin{abstract} We classify finite Morse index solutions of the following nonlocal Lane-Emden equation \begin{equation*}
(-\Delta)^{s} u=|u|^{p-1} u \ \ \ \mathbb{R}^n
\end{equation*}
for $1<s<2$ via a novel monotonicity formula. For local cases $s=1$ and $s=2$ this classification is provided by Farina in \cite{f} and Davila, Dupaigne, Wang and Wei in \cite{ddww}, respectively. Moreover, for the nonlocal case $0<s<1$ finite Morse index solutions are classified by  Davila, Dupaigne and Wei in \cite{ddw}.
\end{abstract}

\tableofcontents

\section{Introduction and Main Results}
We study the classification of stable solutions of the following equation
\begin{equation}\label{main}
(-\Delta)^{s} u=|u|^{p-1} u \ \ \ \mathbb{R}^n
\end{equation}
where  $(-\Delta)^{s}$  is the fractional Laplacian operator for $1<s<2$. For various parameters $s$ and $p$ this equation has been of attention of many experts in the field of partial differential equations.
\subsection{The local case}
For the case of $s=1$,  a celebrated result of Gidas and Spruck in \cite{gs} shows that the only nonnegative solution solution of the Lane-Emden equation is $u=0$ for $1<p<p_S$ where
\begin{eqnarray*}\label{ps}
p_S(n)= \left\{ \begin{array}{lcl}
\hfill \infty   \ \ && \text{if } n \le 2,\\
\hfill  \frac{n+2}{n-2} \ \ && \text{if } n > 2,
\end{array}\right.
\end{eqnarray*}
that is called the Sobolev exponent. In addition, for the critical case $p=p_S(n)$ it is shown by Caffarelli-Gidas-Spruck \cite{cgs} that there is a unique (up to translation and rescaling) positive solution for the Lane-Emden equation. For finite Mose index solutions (not necessarily positive), such classification is provided by Farina in \cite{f} and the critical exponent, called Joseph-Lundgren \cite{jl} exponent, is given by
 \begin{eqnarray}\label{pc1}
p_c(n)= \left\{ \begin{array}{lcl}
\hfill \infty   \ \ && \text{if } n \le 10,\\
\hfill  \frac{(n-2)^2 -4n+8\sqrt{n-1} }{(n-2)(n-10)} \ \ && \text{if } n \ge 11,
\end{array}\right.
\end{eqnarray}
Note that $p_c(n)>p_S(n)$ for $n>2$.

For the case of $s=2$, Wei and Xu \cite{wx} (see also Lin \cite{lin}) proved that the only nonnegative solution of the fourth order Lane-Emden equation is $u=0$ for $1<p<p_S$ where $p_S(n)$ is the Sobolev exponent, i.e.
\begin{eqnarray}\label{pc}
p_S(n)= \left\{ \begin{array}{lcl}
\hfill \infty   \ \ && \text{if } n \le 4,\\
\hfill  \frac{n+4}{n-4} \ \ && \text{if } n > 4.
\end{array}\right.
\end{eqnarray}
Moreover, for the critical case $p=p_S(n)$ they showed that there is a unique (up to translation and rescaling) positive solution for the fourth order Lane-Emden equation.  For finite Mose index solutions (not necessarily positive),  Davila, Dupaigne, Wang and Wei in \cite{ddww} gave a complete classification. The Joseph-Lundgren  exponent, computed by Gazzola and  Grunau in \cite{gg}, is the following
 \begin{eqnarray}\label{pc2}
p_c(n)= \left\{ \begin{array}{lcl}
\hfill \infty   \ \ && \text{if } n \le 12,\\
\hfill  \frac{ n+2 -\sqrt{ n^2 +4 -n\sqrt{n^2-8n+32} }}{n-6-\sqrt{n^2+4-n\sqrt{n^2-8n+32}}} \ \ && \text{if } n \ge 13,
\end{array}\right.
\end{eqnarray}
The key idea of the proof of Davila, Dupaigne, Wang and Wei in \cite{ddww}  is proving and applying a  monotonicity formula. Note that a monotonicity formula for the second order equation is established by F. Pacard in \cite{pak}.

We also refer the interested readers to Wei-Xu in \cite{wx} for classification of solutions of higher order conformally invariant equations, i.e. $s$ any positive integer.

\subsection{The nonlocal case}
Assume that $u\in C^{2\sigma}(\mathbb R^n)$, $\sigma >s>0$ and
$$ \int_{\mathbb R^n} \frac{|u(y)|}{(1+|y|)^{n+2s}} dy<\infty$$ so the fractional Laplacian of $u$
\begin{equation}
(-\Delta)^s u(x):= p.v. \int_{\mathbb R^n} \frac{u(x)-u(y)}{|x-y|^{n+2s}} dy
\end{equation}
is well-defined for every $x\in\mathbb R^n$.

For the case of $0<s<1$, a counterpart of the classification results of Gidas-Spruck \cite{gs} and Caffarelli-Gidas-Spruck \cite{cgs} holds for the fractional Lane-Emden equation, see the work of Li \cite{li} and Chen-Li-Ou \cite{clo}. In this case, the Sobolev exponent is the following
\begin{eqnarray}\label{pns}
p_S(n,s)= \left\{ \begin{array}{lcl}
\hfill \infty   \ \ && \text{if } n \le 2s,\\
\hfill  \frac{n+2s}{n-2s} \ \ && \text{if } n > 2s.
\end{array}\right.
\end{eqnarray}
Very recently, for the case of $0<s<1$, Davila, Dupaigne and Wei \cite{ddw} gave a complete classification of finite Morse index solutions of (\ref{main}) via proving and applying a monotonicity formula.  As a matter of fact, they proved that for either $1<p<p_S(n, s)$ or $p>p_S(n, s)$ and
$$ p \frac{\Gamma(\frac{n}{2}-\frac{s}{p-1}) \Gamma(s+\frac{s}{p-1})}{\Gamma(\frac{s}{p-1}) \Gamma(\frac{n-2s}{2}-\frac{s}{p-1})}>\frac{ \Gamma(\frac{n+2s}{4})^2 }{\Gamma(\frac{n-2s}{4})^2}$$ the only finite More index solution is zero. In this work, we are interested in knowing whether such classification results
hold for finite Morse index solutions of (\ref{main}) when $1<s<2$.

There are different ways of defining the fractional operator $(-\Delta)^{s}$ where $1<s<2$, just like the case of $0<s<1$. Applying the Fourier transform one can define the fractional Laplacian by
$$\widehat{ (-\Delta)^{s}}u(\zeta)=|\zeta|^{2s} \hat u(\zeta)$$ or equivalently define this operator inductively by  $(-\Delta)^{s}=  (-\Delta)^{s-1} o (-\Delta)$, see \cite{rs}.   Recently, Yang in \cite{yang} gave a characterization of the fractional Laplacian $(-\Delta)^{s}$, where $s$ is  any positive, noninteger number as the Dirichlet-to-Neumann map for a function $u_e$ satisfying a  higher order elliptic equation in the upper half space with one extra spatial dimension. This is a generalization of the work of Caffarelli and Silvestre in \cite{cs} for the case of $0<s<1$. See also Case-Chang \cite{cc} and Chang-Gonzales \cite{cg}.

Throughout this note set $b:=3-2s$ and define the operator $$\Delta_b w:=\Delta w+\frac{b}{y} w_y=y^{-b} \div(y^b \nabla w)$$
for a function $w\in W^{2,2}(\mathbb R^{n+1},y^b)$.

\begin{thm} \cite{yang} Let $1<s<2$. For functions $u_e \in W^{2,2} (\mathbb R^{n+1}_+, y^b)$ satisfying the equation
$$\Delta^2_b u_e=0$$ on the upper half space for $(x, y) \in \mathbb R^{n}\times \mathbb R_+$ where
$y$ is the special direction, and the
boundary conditions
\begin{eqnarray*}
u_e(x,0)&=&f(x) \\
\lim_{y\to 0} y^{b}\partial_y{u_e(x,0)}&=& 0
\end{eqnarray*}
along $\{y=0\}$ where $f(x)$ is some function defined on $H^s(\mathbb R^n)$ we have the result that
$$ (-\Delta)^s f(x)= C_{n,s} \lim_{y\to 0} y^{b} \partial_y \Delta_b u_e (x,y) $$
Moreover, $$ \int_{\mathbb R^n} |\xi|^{2s} |\hat{u(\xi)}|^2 d\xi=C_{n,s} \int_{\mathbb {R}^{n+1}_+} y^b |\Delta _b u_e(x,y)|^2 dx dy$$
\end{thm}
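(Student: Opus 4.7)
The plan is to mimic the Caffarelli--Silvestre argument at the fourth--order level by reducing the PDE $\Delta_b^2 u_e=0$ to an ODE in $y$ via a Fourier transform in the tangential variable $x$, and then to read off the Dirichlet--to--Neumann map from the resulting explicit profile. A cleaner route exploits the factorization $\Delta_b^2=\Delta_b\circ\Delta_b$ together with the algebraic identity $b=3-2s=1-2(s-1)$, so that the auxiliary function $v:=\Delta_b u_e$ itself satisfies the standard Caffarelli--Silvestre extension equation $\Delta_b v=0$ associated with the fractional exponent $s-1\in(0,1)$.

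First I would Fourier--transform in $x$. The equation becomes $L_\xi^2\widehat{u_e}=0$ with $L_\xi=\partial_y^2+\tfrac{b}{y}\partial_y-|\xi|^2$, and after the rescaling $t=|\xi|y$ this is $M^2V=0$ with $M=\partial_t^2+\tfrac{b}{t}\partial_t-1$. The two linearly independent solutions of $MV=0$ are the modified Bessel--type functions $t^{s-1}K_{s-1}(t)$ and $t^{s-1}I_{s-1}(t)$; the full kernel of $M^2$ is spanned by these together with two further solutions produced by variation of parameters. Imposing decay as $y\to\infty$ and the two boundary conditions $V(0)=\widehat{f}(\xi)$ and $\lim_{t\to 0}t^bV'(t)=0$ then selects a unique solution of the form $\widehat{u_e}(\xi,y)=\widehat{f}(\xi)\,\Phi(|\xi|y)$ for an explicit profile $\Phi$.

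Next I would apply the Caffarelli--Silvestre extension theorem to $v=\Delta_b u_e$: since $\Delta_b v=0$ with the CS parameter $s-1\in(0,1)$, one gets
\begin{equation*}
(-\Delta)^{s-1}v(x,0)=-c_{s-1}\lim_{y\to 0}y^b\partial_y v(x,y)=-c_{s-1}\lim_{y\to 0}y^b\partial_y\Delta_b u_e(x,y).
\end{equation*}
It then remains to identify the boundary trace $v(\cdot,0)$. Using the expansion of $\Phi$ near $t=0$, which has a constant mode, a $t^{2s-2}$ mode and higher--order contributions, together with the condition $\lim y^b\partial_y u_e=0$ that eliminates the $t^{-b}=t^{2s-3}$ mode, one computes $\Delta_b u_e|_{y=0}=c_0\,(-\Delta)f$ for an explicit constant $c_0$ involving ratios of Gamma functions. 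Composing the two relations produces the stated formula with $C_{n,s}=-c_{s-1}/c_0$. For the Plancherel--type energy identity, I would use Parseval in $x$ to reduce $\int_{\mathbb{R}^{n+1}_+}y^b|\Delta_b u_e|^2\,dx\,dy$ to $\int_{\mathbb{R}^n}|\widehat{f}(\xi)|^2\bigl(\int_0^\infty y^b|L_\xi\widehat{u_e}(\xi,y)|^2\,dy\bigr)\,d\xi$; the inner integral scales under $y=t/|\xi|$ to $|\xi|^{2s}\int_0^\infty t^b|M\Phi(t)|^2\,dt$, yielding the factor $|\xi|^{2s}$ times a dimensionless constant which one checks agrees with the reciprocal of $C_{n,s}$ above.

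The main obstacle is the explicit computation of the profile $\Phi$ and the careful asymptotic matching near $y=0$ needed to extract $c_0$; the borderline case $s=\tfrac{3}{2}$, where $2s-2$ becomes an integer and logarithmic corrections enter the Bessel expansions, will require separate treatment.
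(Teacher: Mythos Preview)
The paper does not supply its own proof of this statement: it is quoted verbatim from Yang \cite{yang} and used as a black box (note the citation attached to the theorem header). So there is nothing in the paper to compare your argument against.

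That said, your sketch is the standard route for proving such extension results and is essentially the method of \cite{yang}: Fourier transform in the tangential variable, reduction to a Bessel--type ODE in the normal variable, selection of the decaying branch, and identification of the Dirichlet--to--Neumann map from the small--$y$ asymptotics. Your observation that $v=\Delta_b u_e$ satisfies the Caffarelli--Silvestre equation with exponent $s-1\in(0,1)$ is a clean way to organize the computation; the one point to be careful about is that $v$ is not a priori the CS extension of a prescribed boundary datum, so you do need the explicit profile $\Phi$ (or at least its leading asymptotics) to justify the claim $v(\cdot,0)=c_0(-\Delta)f$ before invoking the $(s-1)$--CS map. The logarithmic case $s=3/2$ you flag is real but routine once the Bessel expansions are written out.
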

Applying the above theorem to solutions of (\ref{main}) we conclude that the extended function $u_e(x,y)$ where $x=(x_1,\cdots,x_n)$ and $y\in\mathbb R^+$ satisfies
\begin{eqnarray}\label{maine}
 \left\{ \begin{array}{lcl}
\hfill \Delta^2_b u_e&=& 0   \ \ \text{in}\ \ \mathbb{R}^{n+1}_{+},\\
\hfill  \lim_{y\to 0} y^{b}\partial_y{u_e}&=& 0   \ \ \text{in}\ \ \partial\mathbb{R}^{n+1}_{+},\\
\hfill \lim_{y\to 0} y^{b} \partial_y \Delta_b u_e &=& C_{n,s} |u|^{p-1} u  \ \ \text{in}\ \ \partial\mathbb{R}^{n+1}_{+}
\end{array}\right.
\end{eqnarray}
Moreover, $$ \int_{\mathbb R^n} |\xi|^{2s} |\hat{u(\xi)}|^2 d\xi=C_{n,s} \int_{\mathbb {R}^{n+1}_+} y^b |\Delta _b u_e(x,y)|^2 dx dy$$
Then $u(x)=u_e(x,0)$.  

 For $1<s<2$,  Chen et al in  \cite{cczy} have  classified all positive solutions of (\ref{main})   for $1<p \leq p_S (n, s)$.   The main goal of this paper is to classify all (positive or sign-changing) solutions of (\ref{main}) which are stable outside a compact set. To this end, we first  introduce the corresponding Joseph-Lungren's exponent.    As it is shown by Herbst in \cite{h} (and also \cite{ya}), for $n>2s$ the following Hardy inequality holds
$$\int_{\mathbb R^n} |\xi|^{2s} |\hat \phi|^2 d\xi > \Lambda_{n,s} \int_{\mathbb R^n} |x|^{-2s} \phi^2 dx$$
for any $\phi \in C_c^\infty(\mathbb R^n)$ where the optimal constant given by $$ \Lambda_{n,s}=2^{2s}\frac{ \Gamma(\frac{n+2s}{4})^2  }{ \Gamma(\frac{n-2s}{4})^2}$$

\begin{dfn}
We say that a solution $u$ of (\ref{main}) is stable outside a compact set  if there exists $R_0>0$ such that
\begin{equation}\label{stability} \int_{\mathbb R^n} \int_{\mathbb R^n} \frac{ ( \phi(x)-\phi(y) )^2 }{|x-y|^{n+2s}} dx dy-p \int_{\mathbb R^n} |u|^{p-1} \phi^2 \ge 0
\end{equation}
 for any $\phi\in C_c^\infty(\mathbb R^n\setminus \overline {B_{R_0}})$.
 \end{dfn}
In the following lemma we provide an explicit singular solution for  (\ref{main}).
\begin{lemma} Suppose that $1<s<2$ and $p>p_S(n, s)$ then
\begin{equation}
u_s(x) = A |x|^{-\frac{2s}{p-1}}
\end{equation}
where
$$
A^{p-1} =\frac{\Gamma(\frac{n}{2}-\frac{s}{p-1}) \Gamma(s+\frac{s}{p-1})}{\Gamma(\frac{s}{p-1}) \Gamma(\frac{n-2s}{2}-\frac{s}{p-1})}
$$
solves (\ref{main}).
\end{lemma}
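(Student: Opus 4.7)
The plan is to exhibit $u_s$ as the scale-invariant radial solution and to pin down the constant via the classical Riesz identity for the fractional Laplacian of a power. Equation~(\ref{main}) is invariant under the scaling $u(x)\mapsto \lambda^{2s/(p-1)} u(\lambda x)$, so the natural ansatz is $u_s(x)=A|x|^{-\alpha}$ with $\alpha:=2s/(p-1)$. The hypothesis $p>p_S(n,s)$ is equivalent to $\alpha<(n-2s)/2$, which is precisely what is needed so that every Gamma factor appearing in the claimed formula has positive argument and so that $(-\Delta)^s|x|^{-\alpha}$ makes sense as a tempered distribution.

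The main step is to establish the Riesz identity
\[
(-\Delta)^{s}(|x|^{-\alpha})=\Lambda(\alpha,n,s)\,|x|^{-\alpha-2s},\qquad \Lambda(\alpha,n,s):=2^{2s}\,\frac{\Gamma\!\bigl(\tfrac{\alpha+2s}{2}\bigr)\,\Gamma\!\bigl(\tfrac{n-\alpha}{2}\bigr)}{\Gamma\!\bigl(\tfrac{\alpha}{2}\bigr)\,\Gamma\!\bigl(\tfrac{n-\alpha-2s}{2}\bigr)}.
\]
For $0<s<1$ this is classical and is used, for example, in D\'avila--Dupaigne--Wei~\cite{ddw}. For the range $1<s<2$ I would deduce it either directly from Fourier analysis, using that $\mathcal{F}(|x|^{-\beta})$ is a constant multiple of $|x|^{\beta-n}$ for $0<\beta<n$, or, more concretely, by writing $(-\Delta)^{s}=(-\Delta)^{s-1}\circ(-\Delta)$, computing $-\Delta|x|^{-\alpha}=\alpha(n-2-\alpha)\,|x|^{-\alpha-2}$, and then applying the already-known formula to $|x|^{-\alpha-2}$ with exponent $s-1\in(0,1)$. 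Using $\Gamma(z+1)=z\Gamma(z)$, the Gamma ratios produced this way telescope back precisely to $\Lambda(\alpha,n,s)$.

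Once the Riesz identity is in hand the proof is immediate: substituting $u_s$ into (\ref{main}) gives
\[
A\,\Lambda(\alpha,n,s)\,|x|^{-\alpha-2s}=A^{p}\,|x|^{-\alpha p},
\]
and since $\alpha p=\alpha+\alpha(p-1)=\alpha+2s$ the powers of $|x|$ match automatically, leaving the single identity $A^{p-1}=\Lambda(\alpha,n,s)$. Inserting $\alpha=2s/(p-1)$ and re-writing the Gamma arguments as $\tfrac{\alpha}{2}=\tfrac{s}{p-1}$, $\tfrac{\alpha+2s}{2}=s+\tfrac{s}{p-1}$, $\tfrac{n-\alpha}{2}=\tfrac{n}{2}-\tfrac{s}{p-1}$, and $\tfrac{n-\alpha-2s}{2}=\tfrac{n-2s}{2}-\tfrac{s}{p-1}$ recovers exactly the formula in the statement (the factor $2^{2s}$ being absorbed into the normalization constant $C_{n,s}$ relating the singular-integral definition of the introduction to the Fourier-multiplier convention). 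The one genuine subtlety, and the step I would dwell on, is justifying the Riesz identity when the singular-integral definition has a kernel of order $n+2s>n+2$ on the diagonal; the factorization $(-\Delta)^{s}=(-\Delta)^{s-1}\circ(-\Delta)$ is the cleanest way to bypass this obstacle, since $-\Delta|x|^{-\alpha}$ is again a pure power and the $0<s-1<1$ formula then applies without any new distributional analysis.
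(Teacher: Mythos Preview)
Your approach is essentially the same as the paper's: both establish the Riesz identity $(-\Delta)^s|x|^{-\alpha}=\Lambda(\alpha,n,s)|x|^{-\alpha-2s}$ for $1<s<2$ by factoring through the classical Laplacian and invoking the known $0<s-1<1$ case (the paper cites Fall~\cite{fall} for this), then specialize to $\alpha=2s/(p-1)$ and simplify via $\Gamma(z+1)=z\Gamma(z)$. The only cosmetic difference is the order of composition---the paper writes $(-\Delta)^s=(-\Delta)\circ(-\Delta)^{s-1}$ and applies the fractional part first, whereas you write $(-\Delta)^s=(-\Delta)^{s-1}\circ(-\Delta)$ and apply the local Laplacian first; since powers commute the outcome is identical, and your remark about the $2^{2s}$ normalization factor is apt (the paper's own computation also produces this factor before ``concluding the desired result'').
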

\begin{proof}
From Lemma 3.1 in \cite{fall}, we conclude that when $0<t<1$, for any $-\frac{n+2t}{2}<\beta<\frac{n-2t}{2}$
\begin{equation}
(-\Delta)^{t} |x|^{\frac{2t-n}{2}+\beta} = \gamma_t(\beta) |x|^{\frac{2t-n}{2}+\beta-2t}
\end{equation}
where
\begin{equation}
\gamma_t(\beta) = 2^{2t} \frac{\Gamma(\frac{n+2t+2\beta}{4}) \Gamma(\frac{n+2t-2\beta}{4})}{\Gamma(\frac{n-2t-2\beta}{4})\Gamma(\frac{n-2t+2\beta}{4})}
\end{equation}
From the fact that $(-\Delta)^{s} =(-\Delta)o(-\Delta)^{t} $ for $0<t=s-1<1$ we have
\begin{eqnarray}\label{}
(-\Delta)^{s} |x|^{\frac{2t-n}{2}+\beta} =\gamma_t(\beta) (-\Delta) |x|^{\frac{2t-n}{2}+\beta-2t}=-\gamma_t(\beta)
 \eta_{t}(n+\eta_{t}-2) |x|^{\frac{2t-n}{2}+\beta-2t-2}
 \end{eqnarray}
where $\eta_t=\frac{2t-n}{2}+\beta-2t$. Now using the change of variable $t=s-1$ we get
\begin{eqnarray}\label{}
(-\Delta)^{s} |x|^{\frac{2s-n}{2}+\beta-1} &=&-\gamma_{s-1}(\beta)
 \eta_{s-1}(n+\eta_{s-1}-2) |x|^{\frac{2s-n}{2}+\beta-2s-1}
 \\&=& -\gamma_{s-1}(\beta)
 \eta_{s-1}(n+\eta_{s-1}-2)  \left( |x|^{\frac{2s-n}{2}+\beta-1}\right)^p
 \end{eqnarray}
where $\frac{\frac{2s-n}{2}+\beta-2s-1}{\frac{2s-n}{2}+\beta-1}=p$. From this we conclude that $\beta=\frac{-2s}{p-1}+\frac{n-2s}{2} +1$. This implies
\begin{equation}
u_s(x) = A |x|^{-\frac{2s}{p-1}}
\end{equation}
where
$$
A^{p-1} = \lambda\left(\frac{n-2s}{2} -\frac{2s}{p-1}+1\right)
$$
is a solution of (\ref{main}) for
\begin{equation}
\lambda(\beta) =   -\gamma_{s-1}(\beta)
 \eta_{s-1}(n+\eta_{s-1}-2)
\end{equation}
Elementary calculations show that
\begin{eqnarray}\label{al1}
\gamma_{s-1}(\beta) &=& 2^{2s-2} \frac{\Gamma(\frac{n}{2}- \frac{s}{p-1}) \Gamma(s+\frac{s}{p-1}-1)}{\Gamma(\frac{s}{p-1})\Gamma(\frac{n-2s}{2} -\frac{s}{p-1}+1)}
 \end{eqnarray}
and
\begin{eqnarray}\label{al2}
- \eta_{s-1}(n+\eta_{s-1}-2) = 4 \left(s+\frac{s}{p-1}-1\right)\left(\frac{n-2s}{2}-\frac{s}{p-1}\right)
  \end{eqnarray}
  From (\ref{al1}) and (\ref{al2}) and using the property $a\Gamma(a)=\Gamma(a+1)$ we conclude the desired result.

\end{proof}

Here is our main result.
\begin{thm}\label{mainthm}
Suppose that $n\ge 1$ and $1<s<\delta<2$. Let $u\in C^{2\delta}(\mathbb R^n)\cap L^1(\mathbb R^n, (1+|y|)^{n+2s}dy)$ be a solution of (\ref{main}) that is stable outside a compact set. Then either for $1<p<p_S(n,s)$ or for $p>p_S(n,s)$ and 
\begin{equation}\label{conditionp} 
p \frac{\Gamma(\frac{n}{2}-\frac{s}{p-1}) \Gamma(s+\frac{s}{p-1})}{\Gamma(\frac{s}{p-1}) \Gamma(\frac{n-2s}{2}-\frac{s}{p-1})}>\frac{ \Gamma(\frac{n+2s}{4})^2 }{\Gamma(\frac{n-2s}{4})^2}
\end{equation}
solution $u$ must be zero.  Moreover for the case $p=p_S(n,s)$, a solution $u$ has finite energy that is
$$ \int_{\mathbb R^n} |u|^{p+1} = \int_{\mathbb R^n} \int_{\mathbb R^n} \frac{(u(x)-u(y))^2}{|x-y|^{n+2s}}<\infty$$
If in addition $u$ is stable,  then $u$ must be zero.
\end{thm}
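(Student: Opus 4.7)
\medskip
\noindent\textbf{Overall strategy.} The plan is to lift the problem to the Yang extension (\ref{maine}) and derive a novel monotonicity formula for a rescaled weighted energy on half-balls; this monotonicity is the key technical device promised in the abstract. For each $\lambda>0$ introduce the rescaled extension $u_e^{\lambda}(x,y):=\lambda^{2s/(p-1)}u_e(\lambda x,\lambda y)$ and seek an energy of the form
\begin{equation*}
E(u_e,\lambda)=\lambda^{\frac{4s}{p-1}+2s-n}\Bigl(\tfrac12\int_{B_\lambda^{+}} y^{b}|\Delta_b u_e|^2\,dxdy-\tfrac{C_{n,s}}{p+1}\int_{B_\lambda\cap\{y=0\}}|u|^{p+1}\,dx\Bigr)+\mathcal{B}(u_e,\lambda),
\end{equation*}
where $\mathcal{B}(u_e,\lambda)$ is a linear combination of scale-invariant boundary integrals on $\partial B_\lambda^{+}$, weighted by $y^{b}$ and built from $u_e$, $\nabla u_e$, $\Delta_b u_e$, and $\partial_r u_e$. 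The coefficients in $\mathcal{B}$ are to be tuned so that, after invoking $\Delta_b^{2}u_e=0$ and integrating by parts, $dE/d\lambda$ reduces to a sum of nonnegative squares on $\partial B_\lambda^{+}$, in particular dominating a term proportional to $\int_{\partial B_\lambda^{+}} y^{b}(\partial_\lambda u_e^{\lambda})^2$.

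\medskip
\noindent\textbf{Blow-down and supercritical case.} Once monotonicity is in hand the argument follows the Davila--Dupaigne--Wei scheme. Stability outside a compact set combined with a Caccioppoli-type bound produces a uniform upper estimate for $E(u_e,\lambda)$ at large $\lambda$; monotonicity then forces $\partial_\lambda u_e^{\lambda}\to 0$ along a blow-down sequence, so any limit $u_e^{\infty}$ is homogeneous of degree $-2s/(p-1)$ whose trace is a stable homogeneous solution of (\ref{main}) on $\mathbb{R}^n\setminus\{0\}$. By the singular-solution lemma proved above, such a trace satisfies $|u^{\infty}|^{p-1}=A^{p-1}|x|^{-2s}$, and plugging this into (\ref{stability}) yields
\begin{equation*}
\int_{\mathbb{R}^n}\int_{\mathbb{R}^n}\frac{(\phi(x)-\phi(y))^2}{|x-y|^{n+2s}}\,dxdy\ge p A^{p-1}\int_{\mathbb{R}^n}|x|^{-2s}\phi^{2}\,dx.
\end{equation*}
Sharpness of Herbst's inequality forces $pA^{p-1}\le \Lambda_{n,s}$, which is exactly the negation of (\ref{conditionp}); consequently $u_e^{\infty}\equiv 0$, and a standard $\varepsilon$-regularity bootstrap propagates this decay back to $u\equiv 0$. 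The subcritical range $1<p<p_S(n,s)$ is handled by the same blow-down (the exponent $-2s/(p-1)$ falls outside the range where a nontrivial homogeneous stable solution can exist), or alternatively by a direct Farina-type computation testing (\ref{stability}) with $\phi=|u|^{q}u\,\zeta$ for a suitable $q>0$ and cutoff $\zeta$, which forces $\int|u|^{p+1}=0$.

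\medskip
\noindent\textbf{Critical case and principal obstacle.} At $p=p_S(n,s)$ the prefactor $\lambda^{4s/(p-1)+2s-n}$ is constant, so $E(u_e,\lambda)$ is nondecreasing and bounded, hence admits a finite limit as $\lambda\to\infty$. The monotonicity identity then delivers $\int y^{b}(\partial_\lambda u_e^{\lambda})^{2}\,dxdy<\infty$, and a Pohozaev-type identity upgrades this to the finite-energy statement
\begin{equation*}
\int_{\mathbb{R}^n}|u|^{p+1}=\int_{\mathbb{R}^n}\int_{\mathbb{R}^n}\frac{(u(x)-u(y))^2}{|x-y|^{n+2s}}<\infty.
\end{equation*}
Under full stability, testing (\ref{stability}) with $\phi=u\,\eta_R$ and using finite energy to let $R\to\infty$ gives $(p-1)\int|u|^{p+1}\le 0$, hence $u\equiv 0$. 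The main difficulty of the project is the derivation of the monotonicity formula itself: for $s\in(1,2)$ the extension is a fourth-order weighted biharmonic operator, so many more Pohozaev-type boundary cross terms arise than in the second-order nonlocal setting of \cite{ddw} or in the integer-$s$ regimes of \cite{pak,ddww}; identifying the precise polynomial coefficients in $\mathcal{B}(u_e,\lambda)$ that annihilate all cross terms and leave a sign-definite remainder is the principal computational obstacle.
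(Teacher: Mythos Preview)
Your overall architecture (Yang extension, monotonicity formula with boundary correctors, energy estimates, blow-down to a homogeneous limit) matches the paper's and is sound. The serious gap is in how you rule out the homogeneous blow-down limit.

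You write that ``by the singular-solution lemma proved above, such a trace satisfies $|u^{\infty}|^{p-1}=A^{p-1}|x|^{-2s}$.'' This is not what that lemma says: it only exhibits \emph{one} explicit homogeneous solution, it does not assert that every homogeneous solution is radial. A priori the blow-down limit has the form $u^{\infty}(x)=r^{-2s/(p-1)}\psi(\theta)$ with an unknown profile $\psi$ on $\mathbb{S}^{n-1}$, and nothing in your outline forces $\psi$ to be constant. Consequently, plugging into stability you only get
\[
\int\!\!\int\frac{(\phi(x)-\phi(y))^2}{|x-y|^{n+2s}}\,dxdy\ \ge\ p\int_{\mathbb{R}^n}|\psi(\theta)|^{p-1}\,|x|^{-2s}\phi^{2}\,dx,
\]
and the Herbst constant no longer gives a contradiction unless you already know $|\psi|^{p-1}\equiv A^{p-1}$. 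This step is exactly the ``classification of stable homogeneous solutions'' and it is the second of the two new ingredients the paper highlights.

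The paper fills this gap (its Theorem~\ref{homog}) by a direct argument on the sphere that does \emph{not} presuppose radiality: writing the equation for $\psi$ as
\[
A_{n,s}\,\psi(\theta)+\int_{\mathbb{S}^{n-1}}K_{\frac{2s}{p-1}}(\langle\theta,\sigma\rangle)\bigl(\psi(\theta)-\psi(\sigma)\bigr)\,d\sigma=\psi^{p}(\theta),
\]
and testing stability with $\phi(x)=r^{-(n-2s)/2}\psi(\theta)\eta_\varepsilon(r)$, one obtains a second relation with $\Lambda_{n,s}$ and $K_{(n-2s)/2}$ in place of $A_{n,s}$ and $K_{2s/(p-1)}$. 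The crucial observation is that the kernel $K_{\alpha}(\langle\theta,\sigma\rangle)=\int_0^1\frac{t^{n-1-\alpha}+t^{2s-1+\alpha}}{(t^2+1-2t\langle\theta,\sigma\rangle)^{(n+2s)/2}}\,dt$ is \emph{strictly decreasing} in $\alpha$; since $p>p_S(n,s)$ gives $\tfrac{2s}{p-1}<\tfrac{n-2s}{2}$, one has $K_{(n-2s)/2}<K_{2s/(p-1)}$, and combining the two relations with the hypothesis $pA_{n,s}>\Lambda_{n,s}$ (which is \eqref{conditionp}) forces $\psi\equiv 0$. Your proposal needs this (or an equivalent) argument; the Herbst-inequality shortcut only works once radiality of $\psi$ has been established, and that is precisely what is at stake.

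A secondary remark: for the subcritical range the paper does not run the blow-down; it uses the energy estimates to place $u$ in $\dot H^{s}\cap L^{p+1}$ and then combines $\int|u|^{p+1}=\|u\|_{\dot H^{s}}^{2}$ with a Pohozaev identity to conclude. Your Farina-type alternative $\phi=|u|^{q}u\,\zeta$ is delicate for nonlocal operators (the usual chain-rule integrations by parts are unavailable), so if you pursue it you should be explicit about how the fractional commutators are controlled.
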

Note that when $s=1$ and $s=2$ assumption (\ref{conditionp})  is equivalent to $1<p<p_c(n)$ where $p_c(n)$ is given by (\ref{pc1}) and (\ref{pc2}), respectively. Here is the computation for the case of $s=1$. Note that when $s=1$ the assumption (\ref{conditionp}) is 
\begin{equation}\label{rrr} 
p \frac{\Gamma(\frac{n}{2}-\frac{1}{p-1}) \Gamma(1+\frac{1}{p-1})}{\Gamma(\frac{1}{p-1}) \Gamma(\frac{n}{2}-1-\frac{1}{p-1})}>\frac{ \Gamma(\frac{n+2}{4})^2 }{\Gamma(\frac{n-2}{4})^2}. 
\end{equation}
We now use  properties of the gamma function, i.g. $\Gamma(1+a)=a\Gamma(a)$ for $a>0$, to get
\begin{eqnarray}
\Gamma\left(\frac{n}{2}-\frac{1}{p-1}\right) &=& \left(\frac{n}{2}-1-\frac{1}{p-1}\right) \Gamma\left(\frac{n}{2}-1-\frac{1}{p-1}\right) \\
\Gamma\left(1+\frac{1}{p-1}\right) &=& \left(\frac{1}{p-1}\right)  \Gamma\left(\frac{1}{p-1}\right) \\
 \Gamma\left(\frac{n+2}{4}\right) &=&  \left(\frac{n-2}{4} \right) \Gamma\left(\frac{n-2}{4}\right).
  \end{eqnarray}
Substituting this in (\ref{rrr}) we get 
$$ p\left(\frac{n}{2}-1-\frac{1}{p-1}\right)\left(\frac{1}{p-1}\right) > \left(\frac{n-2}{4} \right)^2.$$ 
Straightforward calculations show that this is equivalent to $1<p<p_c(n)$ where $p_c(n)$ is given by (\ref{pc1}).

Some remarks are in order. Even though the proof of Theorem \ref{mainthm} follows from the general procedure used in \cite{ddww} and \cite{ddw}, there are a few new ingredients in our proofs. First (in Section 2) we have derived the monotonicity formula involving higher order fractional operators.  Second (in Section 3) we have developed a new and direct method to prove the non-existence of stable homogeneous solutions. This method avoids multiplication or integration by parts and works for any fractional operator.

The monotonicity formula we derived in Section 2 implicitly used the Pohozaev's type identity. For higher order factional operator the Pohozaev identity has  been derived recently  by Ros-Oton and Serra \cite{rs}.

\section{Monotonicity Formula}
The key technique of our proof is a monotonicity formula that is developed in this section.   Define
\begin{eqnarray*}\label{energy}
E(r,x,u_e)& :=& r^{2s\frac{p+1}{p-1}-n} \left(   \int_{  \mathbb{R}^{n+1}_{+}\cap B_r(x_0)} \frac{1}{2} y^{3-2s}|\Delta_b u_e|^2-  \frac{C_{n,s}}{p+1} \int_{  \partial\mathbb{R}^{n+1}_{+}\cap B_r(x_0)} u_e^{p+1}   \right)\\
&&- \frac{s}{p-1} \left( \frac{p+2s-1}{p-1}-n\right) r^{-3+2s+\frac{4s}{p-1}-n}  \int_{  \mathbb{R}^{n+1}_{+}\cap \partial B_r(x_0)} y^{3-2s} u_e^2 \\
&&-\frac{s}{p-1} \left( \frac{p+2s-1}{p-1}-n\right) \frac{d}{dr} \left[ r^{\frac{4s}{p-1}+2s-2-n}  \int_{  \mathbb{R}^{n+1}_{+}\cap \partial B_r(x_0)} y^{3-2s} u_e^2 \right]\\
&&+ \frac{1}{2} r^3   \frac{d}{dr} \left[ r^{\frac{4s}{p-1}+2s-3-n}  \int_{  \mathbb{R}^{n+1}_{+}\cap \partial B_r(x_0)} y^{3-2s} \left(  \frac{2s}{p-1} r^{-1} u+ \frac{\partial u_e}{\partial r}\right)^2 \right]\\
&&+ \frac{1}{2}    \frac{d}{dr} \left[ r^{2s\frac{p+1}{p-1}-n}  \int_{  \mathbb{R}^{n+1}_{+}\cap \partial B_r(x_0)} y^{3-2s}\left(  | \nabla u_e|^2 - \left|\frac{\partial u_e}{\partial r}\right|^2 \right) \right]\\
&&+ \frac{1}{2}    r^{2s\frac{p+1}{p-1}-n-1}  \int_{  \mathbb{R}^{n+1}_{+}\cap \partial B_r(x_0)} y^{3-2s} \left(  | \nabla u_e|^2 - \left|\frac{\partial u_e}{\partial r}\right|^2 \right)
\end{eqnarray*}

\begin{thm}\label{mono}
Assume that $n>\frac{p+4s-1}{p+2s-1}+ \frac{2s}{p-1}-b$. Then, $E(\lambda,x,u_e)$ is a nondecreasing function of $\lambda>0$. Furthermore,
\begin{equation}
\frac{dE(\lambda,x,u_e)}{d\lambda} \ge C(n,s,p) \  \lambda^{\frac{4s}{p-1}+2s-2-n}    \int_{  \mathbb{R}^{n+1}_{+}\cap \partial B_\lambda(x_0)}  y^{3-2s}\left(  \frac{2s}{p-1} r^{-1} u+ \frac{\partial u_e}{\partial r}\right)^2
\end{equation}
where $C(n,s,p)$ is independent from $\lambda$.
\end{thm}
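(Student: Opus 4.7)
The plan is to exploit the scaling invariance of the extension problem (\ref{maine}). Fix $x_0 = 0$ without loss of generality and introduce the rescaled solution
$$u^{\lambda}(X) := \lambda^{\frac{2s}{p-1}} u_e(\lambda X), \qquad X = (x,y) \in \mathbb{R}^{n+1}_{+},$$
which again solves (\ref{maine}), since both the equation $\Delta_b^2 u_e = 0$ and the two boundary relations are invariant under this rescaling. A change of variables in each piece of $E$, carefully accounting for the weight $y^{b}$ and the homogeneities of $u_e$, $\nabla u_e$, and $\Delta_b u_e$, shows that $E(\lambda, 0, u_e) = \overline{E}(u^{\lambda})$, where $\overline{E}$ denotes $E$ evaluated at scale $\lambda = 1$. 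It therefore suffices to differentiate $\overline{E}(u^{\lambda})$ in $\lambda$, using
$$\frac{d}{d\lambda} u^{\lambda} \Big|_{\lambda = \lambda_0} = \frac{1}{\lambda_0}\Bigl(X\cdot\nabla u^{\lambda_0} + \frac{2s}{p-1} u^{\lambda_0}\Bigr).$$

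The essential algebraic tool is a Pohozaev-type identity for the weighted bi-Laplacian $\Delta_b^{2}$ with the mixed boundary conditions of (\ref{maine}). One produces it by multiplying $\Delta_b^{2} u_e = 0$ by the scaling generator $X\cdot\nabla u_e + \tfrac{2s}{p-1} u_e$ and integrating by parts against $y^{b}\, dX$ on $\mathbb{R}^{n+1}_{+} \cap B_r(0)$, repeatedly invoking the divergence form $\Delta_b w = y^{-b}\div(y^{b}\nabla w)$. The contributions from $\{y=0\}$ are handled by the two conditions $\lim_{y\to 0} y^{b} \partial_y u_e = 0$ and $\lim_{y\to 0} y^{b} \partial_y \Delta_b u_e = C_{n,s} |u|^{p-1}u$, which together convert the only nontrivial $\{y=0\}$ piece into the nonlinear boundary integral $\int u_e^{p+1}$ that appears in the main part of $E$. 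This parallels the fourth-order computation in \cite{ddww} and the weighted extension approach of \cite{ddw}, but it requires the higher-order Pohozaev identity recently established by Ros-Oton and Serra \cite{rs}.

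With the Pohozaev identity in hand, the $\lambda$-derivatives of the first two lines of $E$ collapse into surface integrals on $\mathbb{R}^{n+1}_{+} \cap \partial B_{\lambda}(0)$ involving $u_e$, $\partial_r u_e$, $\nabla u_e$, and $\Delta_b u_e$. The key observation, which motivates the elaborate form of $E$, is that the remaining lines of $E$, all of the form $\frac{d}{dr}\bigl[\,r^{\alpha}\!\int_{\partial B_r} y^{b}(\cdots)\bigr]$, are designed so that their $\lambda$-derivatives exactly cancel every residual surface integral except for one positive contribution, namely
$$C(n,s,p)\, \lambda^{\frac{4s}{p-1}+2s-2-n}\!\int_{\mathbb{R}^{n+1}_{+}\cap\partial B_{\lambda}(0)} y^{b}\Bigl(\frac{2s}{p-1}r^{-1}u_e + \frac{\partial u_e}{\partial r}\Bigr)^{2}.$$
The dimensional hypothesis $n > \tfrac{p+4s-1}{p+2s-1} + \tfrac{2s}{p-1} - b$ enters precisely at the point where the scalar $C(n,s,p)$, assembled from the algebraic combination of prefactors, must be checked to be strictly positive.

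The main obstacle is the algebraic bookkeeping in the last step. The weighted integration-by-parts procedure used to balance the bulk integral $\int y^{b}|\Delta_b u_e|^{2}$ against the scaling generator generates a long list of boundary terms on $\partial B_r \cap \mathbb{R}^{n+1}_{+}$; verifying that the telescoping chain of $\frac{d}{dr}$-corrections built into $E$ absorbs every one of these except the claimed square, with the correct coefficient, is delicate, and each coefficient in $E$ is uniquely fixed by this cancellation requirement. The presence of the weight $y^{b}$ with $b = 3-2s$ produces extra terms compared with the local fourth-order case of \cite{ddww}, so this bookkeeping must be redone from scratch for the extension problem associated with $1<s<2$.
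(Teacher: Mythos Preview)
Your outline matches the paper's approach: rescale to $u_e^{\lambda}(X)=\lambda^{2s/(p-1)}u_e(\lambda X)$, observe $E(\lambda,0,u_e)=E(1,0,u_e^{\lambda})$, differentiate in $\lambda$, and use that $\Delta_b^2 u_e^{\lambda}=0$ together with the two boundary conditions to reduce $\frac{d}{d\lambda}$ of the leading piece to integrals over $\mathbb{R}^{n+1}_{+}\cap\partial B_1$; the remaining lines of $E$ then absorb all but a nonnegative residue. Two clarifications are worth making.

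First, the Ros--Oton--Serra identity \cite{rs} is not actually invoked; the paper works entirely in the extension and obtains the needed Pohozaev-type relation by hand via two successive weighted integrations by parts of $\int y^b v_e^{\lambda}\,\Delta_b\frac{du_e^{\lambda}}{d\lambda}$, with $v_e^{\lambda}=\Delta_b u_e^{\lambda}$. Nothing from \cite{rs} is needed here, and citing it as a required ingredient is misleading.

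Second, the place where the paper's computation differs from your description is the mechanism that isolates the positive term. Rather than cancelling a ``long list of boundary terms'' against the correction lines of $E$, the paper rewrites, on $\partial B_1$, the operator $\Delta_b$ in polar form and converts the radial derivatives $\partial_r u_e^{\lambda}$, $\partial_r^2 u_e^{\lambda}$, $\partial_r v_e^{\lambda}$ into $\lambda$-derivatives via the scaling relations. This produces
\[
\frac{d\bar E}{d\lambda}=\int_{\partial B_1\cap\mathbb{R}^{n+1}_{+}}\theta_1^{b}\Bigl[2\lambda^3\bigl(\tfrac{d^2u_e^{\lambda}}{d\lambda^2}\bigr)^{2}+4\lambda^2\tfrac{d^2u_e^{\lambda}}{d\lambda^2}\tfrac{du_e^{\lambda}}{d\lambda}+2(\alpha-\beta)\lambda\bigl(\tfrac{du_e^{\lambda}}{d\lambda}\bigr)^{2}\Bigr]+(\text{exact }\tfrac{d}{d\lambda}\text{-derivatives}),
\]
with $\alpha=n+b-\tfrac{4s}{p-1}$ and $\beta=\tfrac{2s}{p-1}\bigl(\tfrac{p+2s-1}{p-1}-n-b\bigr)$. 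Completing the square in the bracket gives $2\lambda\bigl(\lambda\tfrac{d^2u_e^{\lambda}}{d\lambda^2}+\tfrac{du_e^{\lambda}}{d\lambda}\bigr)^{2}+2(\alpha-\beta-1)\lambda\bigl(\tfrac{du_e^{\lambda}}{d\lambda}\bigr)^{2}$, and the hypothesis $n>\tfrac{p+4s-1}{p+2s-1}+\tfrac{2s}{p-1}-b$ is exactly $\alpha-\beta-1>0$. The correction lines in $E$ are then read off as the antiderivatives of the ``exact derivative'' pieces, rather than being guessed and checked. This organization makes the positivity transparent and is the step your sketch leaves unspecified.
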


\noindent{\bf Proof:}  Suppose that $x_0=0$ and the balls $B_\lambda$ are centred at zero. Set,
\begin{equation}
\bar E(u_e,\lambda):= \lambda^{2s\frac{p+1}{p-1}-n} \left(   \int_{  \mathbb{R}^{n+1}_{+}\cap B_\lambda} \frac{1}{2} y^{b} |\Delta_b u_e|^2 dx dy -  \frac{C(n,s)}{p+1} \int_{  \partial\mathbb{R}^{n+1}_{+}\cap B_\lambda} u_e^{p+1}   \right)
\end{equation}
 Define $v_e:=\Delta_b u_e$, $u_e^\lambda(X):=\lambda^{\frac{2s}{p-1}} u_e(\lambda X)$, and $v_e^\lambda(X):=\lambda^{\frac{2s}{p-1}+2} v_e(\lambda X)$ where $X=(x,y)\in\mathbb{R}^{n+1}_+$.  Therefore, $\Delta_b u_e^\lambda(X)=v_e^\lambda(X)$ and
 \begin{eqnarray}\label{mainex}
 \left\{ \begin{array}{lcl}
\hfill \Delta_b v^\lambda_e&=& 0   \ \ \text{in}\ \ \mathbb{R}^{n+1}_{+},\\
\hfill  \lim_{y\to 0} y^{b}\partial_y{u^\lambda_e}&=& 0   \ \ \text{in}\ \ \partial\mathbb{R}^{n+1}_{+},\\
\hfill \lim_{y\to 0} y^{b} \partial_y v_e^\lambda &=& C_{n,s} {(u^\lambda_e)}^p  \ \ \text{in}\ \ \partial\mathbb{R}^{n+1}_{+}
\end{array}\right.
\end{eqnarray}
In addition,  differentiating with respect to $\lambda$ we have
 \begin{equation}\label{uvl}
 \Delta_b \frac{du_e^\lambda}{d\lambda}=\frac{dv_e^\lambda}{d\lambda}.
 \end{equation}
Note that $$\bar E(u_e,\lambda)=\bar E(u_e^\lambda,1)= \int_{  \mathbb{R}^{n+1}_{+}\cap B_1} \frac{1}{2} y^b  (v_e^\lambda)^2 dx dy -  \frac{C_{n,s}}{p+1} \int_{  \partial\mathbb{R}^{n+1}_{+}\cap B_1} |u_e^\lambda|^{p+1}   $$
Taking derivate of the energy with respect to $\lambda$, we have
 \begin{eqnarray}\label{energy}
\frac{d\bar E(u_e^\lambda,1)}{d\lambda}= \int_{  \mathbb{R}^{n+1}_{+}\cap B_1} y^b v_e^\lambda \frac{dv_e^\lambda}{d\lambda}\  dx dy -  C_{n,s} \int_{  \partial\mathbb{R}^{n+1}_{+}\cap B_1} |u_e^\lambda|^{p}  \frac{du_e^\lambda}{d\lambda}
\end{eqnarray}
Using (\ref{mainex}) we end up with
\begin{eqnarray}\label{energy}
\frac{d\bar E(u_e^\lambda,1)}{d\lambda}= \int_{  \mathbb{R}^{n+1}_{+}\cap B_1}  y^b v_e^\lambda \frac{dv_e^\lambda}{d\lambda}\  dx dy -  \int_{  \partial\mathbb{R}^{n+1}_{+}\cap B_1}  \lim_{y\to 0} y^{b}  \partial_y v_e^\lambda   \frac{du_e^\lambda}{d\lambda}
\end{eqnarray}
From (\ref{uvl}) and by  integration by parts we have
 \begin{eqnarray*}
 \int_{  \mathbb{R}^{n+1}_{+}\cap B_1} y^b v_e^\lambda \frac{dv_e^\lambda}{d\lambda}
&=&  \int_{  \mathbb{R}^{n+1}_{+}\cap B_1} y^b \Delta_b u_e^\lambda \Delta_b \frac{du_e^\lambda}{d\lambda} \\&=&  - \int_{  \mathbb{R}^{n+1}_{+}\cap B_1} \nabla \Delta_b u^\lambda_e \cdot \nabla \left(\frac{du^\lambda_e}{d \lambda}\right) y^b + \int_{ \partial( \mathbb{R}^{n+1}_{+}\cap B_1)} \Delta_b u_e^\lambda y^b \partial_{\nu} \left(   \frac{du^\lambda_e}{d \lambda}  \right)
\end{eqnarray*}
Note that
 \begin{eqnarray*}
-\int_{  \mathbb{R}^{n+1}_{+}\cap B_1} \nabla \Delta_b u_e\cdot \nabla \frac{du^\lambda_e}{d \lambda} y^b &=& \int_{  \mathbb{R}^{n+1}_{+}\cap B_1} \div( \nabla\Delta_b u^\lambda_e y^b) \frac{du^\lambda_e}{d \lambda} -  \int_{ \partial( \mathbb{R}^{n+1}_{+}\cap B_1)} y^b   \partial_\nu (\Delta_b u^\lambda_e )  \frac{du^\lambda_e}{d \lambda}
\\& =&\int_{  \mathbb{R}^{n+1}_{+}\cap B_1} y^b  \Delta_b^2 u^\lambda_e  \frac{du^\lambda_e}{d \lambda} -  \int_{ \partial( \mathbb{R}^{n+1}_{+}\cap B_1)} y^b   \partial_\nu (\Delta_b u^\lambda_e )  \frac{du^\lambda_e}{d \lambda}
\\& =&-  \int_{ \partial( \mathbb{R}^{n+1}_{+}\cap B_1)} y^b   \partial_\nu (\Delta_b u^\lambda_e )  \frac{du^\lambda_e}{d \lambda}
\end{eqnarray*}
Therefore,
 \begin{eqnarray*}
 \int_{  \mathbb{R}^{n+1}_{+}\cap B_1} y^b v_e^\lambda \frac{dv_e^\lambda}{d\lambda}
&=& \int_{ \partial( \mathbb{R}^{n+1}_{+}\cap B_1)} \Delta_b u_e^\lambda y^b \partial_{\nu} \left(   \frac{du^\lambda_e}{d \lambda}  \right) -  \int_{ \partial( \mathbb{R}^{n+1}_{+}\cap B_1)} y^b   \partial_\nu (\Delta_b u^\lambda_e )  \frac{du^\lambda_e}{d \lambda}
\end{eqnarray*}
Boundary of $\mathbb{R}^{n+1}_{+}\cap B_1$ consists of   $\partial\mathbb{R}^{n+1}_{+}\cap B_1$ and  $\mathbb{R}^{n+1}_{+}\cap \partial B_1$. Therefore,
\begin{eqnarray*}
 \int_{  \mathbb{R}^{n+1}_{+}\cap B_1} y^b v_e^\lambda \frac{dv_e^\lambda}{d\lambda}
&=&  \int_{  \partial\mathbb{R}^{n+1}_{+}\cap B_1} - v_e^\lambda \lim_{y\to 0}  y^b \partial_y\left (\frac{du_e^\lambda}{d\lambda}\right) +   \lim_{y\to 0} y^b \partial_y v_e^\lambda \frac{du_e^\lambda}{d\lambda} \\&&+ \int_{  \mathbb{R}^{n+1}_{+}\cap \partial B_1} y^b v_e^\lambda \partial_r \left (\frac{du_e^\lambda}{d\lambda}\right) -  y^b \partial_r v_e^\lambda \frac{du_e^\lambda}{d\lambda}
\end{eqnarray*}
where $r=|X|$, $X=(x,y)\in \mathbb{R}^{n+1}_+$ and $\partial_r=\nabla\cdot \frac{X}{r}$ is the corresponding radial derivative.   Note that the first integral in the right-hand side vanishes since $\partial_y\left (\frac{du_e^\lambda}{d\lambda}\right)=0$ on $ \partial\mathbb{R}^{n+1}_{+}$. From (\ref{energy}) we obtain
 \begin{eqnarray}\label{energy2}
\frac{d\bar E(u_e^\lambda,1)}{d\lambda}= \int_{  \mathbb{R}^{n+1}_{+}\cap \partial B_1}  y^b\left ( v_e^\lambda \partial_r \left (\frac{du_e^\lambda}{d\lambda}\right) -  \partial_r \left(v_e^\lambda\right) \frac{du_e^\lambda}{d\lambda} \right)
\end{eqnarray}
Now note that from the definition of $u_e^\lambda$ and $v_e^\lambda$ and by differentiating in $\lambda $ we get the following for $X\in\mathbb{R}^{n+1}_+$
\begin{eqnarray}\label{ulambda}
\frac{du_e^\lambda(X)}{d\lambda}&=&\frac{1}{\lambda} \left(  \frac{2s}{p-1} u_e^\lambda(X) +r \partial_r u_e^\lambda(X) \right)\\\label{vlambda}
\frac{dv_e^\lambda(X)}{d\lambda}&=&\frac{1}{\lambda} \left(  \frac{2(p+s-1)}{p-1} v_e^\lambda(X) +r \partial_r v_e^\lambda(X) \right)
\end{eqnarray}

Therefore, differentiating with respect to $\lambda $ we get
\begin{eqnarray*}
\lambda \frac{d^2u_e^\lambda(X)}{d\lambda^2} + \frac{du_e^\lambda(X)}{d\lambda}=  \frac{2s}{p-1} \frac{du_e^\lambda(X)}{d\lambda} +r \partial_r \frac{du_e^\lambda(X)}{d\lambda}
\end{eqnarray*}
So, for all $X\in\mathbb{R}^{n+1}_+\cap \partial B_1$
\begin{eqnarray}
\label{u1lambda}
 \partial_r \left(u_e^\lambda(X)\right)
&=& \lambda \frac{du_e^\lambda(X)}{d\lambda} -\frac{2s}{p-1} u_e^\lambda(X)
\\\label{u2lambda}
 \partial_r \left(\frac{du_e^\lambda(X)}{d\lambda}\right)
&= &\lambda \frac{d^2u_e^\lambda(X)}{d\lambda^2} +\frac{p-1-2s}{p-1} \frac{du_e^\lambda(X)}{d\lambda}
\\\label{v2lambda}
\partial_r \left( v_e^\lambda(X)\right) &=& \lambda \frac{dv_e^\lambda(X)}{d\lambda}- \frac{2(p+s-1)}{p-1} v_e^\lambda(X)
\end{eqnarray}
Substituting (\ref{u2lambda}) and (\ref{v2lambda}) in (\ref{energy2}) we get
 \begin{eqnarray}\label{energyder}
\frac{d\bar E(u_e^\lambda,1)}{d\lambda}&=&
\int_{  \mathbb{R}^{n+1}_{+}\cap \partial B_1} y^b v_e^\lambda  \left (   \lambda \frac{d^2u_e^\lambda}{d\lambda^2} +\frac{p-1-2s}{p-1} \frac{du_e^\lambda}{d\lambda}\right) -   y^b \left( \lambda \frac{dv_e^\lambda}{d\lambda}- \frac{2(p+s-1)}{p-1} v_e^\lambda \right) \frac{du_e^\lambda}{d\lambda}
\\&=& \nonumber \int_{  \mathbb{R}^{n+1}_{+}\cap \partial B_1} y^b \left( \lambda  v_e^\lambda    \frac{d^2u_e^\lambda}{d\lambda^2} +3  v_e^\lambda \frac{du_e^\lambda}{d\lambda}    -    \lambda \frac{dv_e^\lambda}{d\lambda} \frac{du_e^\lambda}{d\lambda} \right)
\end{eqnarray}
Taking derivative of (\ref{ulambda}) in $r$ we get
$$ r \frac{\partial^2 u_e^\lambda}{\partial r^2}+ \frac{\partial u_e^\lambda}{\partial r}= \lambda \frac{\partial}{\partial r}\left(\frac{du_e^\lambda}{d\lambda} \right) - \frac{2s}{p-1}  \frac{\partial u_e^\lambda}{\partial r}$$
So, from (\ref{u2lambda}) for all $X\in\mathbb{R}^{n+1}_+\cap \partial B_1$ we have
 \begin{eqnarray}\label{2ru}
\frac{\partial^2 u_e^\lambda}{\partial r^2} &=& \lambda \frac{\partial}{\partial r}\left(\frac{du_e^\lambda}{d\lambda} \right) -  \frac{p+2s-1}{p-1}  \frac{\partial u_e^\lambda}{\partial r} \\&=& \nonumber \lambda \left(  \lambda \frac{d^2u_e^\lambda}{d\lambda^2} +\frac{p-2s-1}{p-1} \frac{du_e^\lambda}{d\lambda}  \right) -   \frac{p+2s-1}{p-1}  \left(  \lambda \frac{du_e^\lambda}{d\lambda} -\frac{2s}{p-1} u_e^\lambda \right)
\\&=&\nonumber \lambda^2 \frac{d^2u_e^\lambda}{d\lambda^2} - \frac{4s}{p-1}   \lambda \frac{du_e^\lambda}{d\lambda} +\frac{2s(p+2s-1)}{(p-1)^2} u_e^\lambda
\end{eqnarray}
Note that
 \begin{eqnarray*}
v_e^\lambda= \Delta_b u^\lambda_e = y^{-b} \div(y^b \nabla u^\lambda_e)
\end{eqnarray*}
and on $\mathbb{R}^{n+1}_+\cap \partial B_1$, we have $$ \div(y^b \nabla u_e^\lambda )=(u_{rr} +(n+b)u_r )\theta_1^b +\div_{\mathcal S^n} (\theta_1^b \nabla_{S^n} u_e^\lambda)$$
where $\theta_1=\frac {y}{r}$. From the above, (\ref{u1lambda}) and (\ref{2ru}) we get
\begin{eqnarray*}
v_e^\lambda &=& \lambda^2 \frac{d^2u_e^\lambda}{d\lambda^2} +    \lambda \frac{du_e^\lambda}{d\lambda} (n+b-\frac{4s}{p-1} )+ u_e^\lambda  (\frac{2s}{p-1})(\frac{p+2s-1}{p-1}-n-b) + \theta_1^{-b}\div_{\mathcal S^n} (\theta_1^b \nabla_{S^n} u_e^\lambda)
\end{eqnarray*}
From this and (\ref{energyder}) we get
 \begin{eqnarray}
\frac{d\bar E(u_e^\lambda,1)}{d\lambda}&=& \int_{  \mathbb{R}^{n+1}_{+}\cap \partial B_1} \theta_1^b \lambda  \left(    \lambda^2 \frac{d^2u_e^\lambda}{d\lambda^2} +\alpha \lambda \frac{du_e^\lambda}{d\lambda} + \beta u_e^\lambda     \right)   \frac{d^2u_e^\lambda}{d\lambda^2}
\\&&+  \int_{  \mathbb{R}^{n+1}_{+}\cap \partial B_1}   \theta_1^b 3\left(    \lambda^2 \frac{d^2u_e^\lambda}{d\lambda^2} +\alpha \lambda \frac{du_e^\lambda}{d\lambda} + \beta u_e^\lambda     \right)          \frac{du_e^\lambda}{d\lambda}
\\&&- \int_{  \mathbb{R}^{n+1}_{+}\cap \partial B_1}     \theta_1^b  \lambda  \frac{du_e^\lambda}{d\lambda}  \frac{d}{d\lambda}  \left(    \lambda^2 \frac{d^2u_e^\lambda}{d\lambda^2} +\alpha \lambda \frac{du_e^\lambda}{d\lambda} + \beta u_e^\lambda     \right)
\\&& +\int_{  \mathbb{R}^{n+1}_{+}\cap \partial B_1} \theta_1^b \lambda   \frac{d^2u_e^\lambda}{d\lambda^2} \theta_1^{-b}\div_{\mathcal S^n} (\theta_1^b \nabla_{S^n} u_e^\lambda)
 \\&& +\int_{  \mathbb{R}^{n+1}_{+}\cap \partial B_1}  3  \theta_1^b \ \frac{du_e^\lambda}{d\lambda}  \theta_1^{-b}\div_{\mathcal S^n} (\theta_1^b \nabla_{S^n} u_e^\lambda)  \\&& -  \int_{  \mathbb{R}^{n+1}_{+}\cap \partial B_1} \theta_1^b  \lambda \frac{d}{d\lambda}\left(  \theta_1^{-b}\div_{\mathcal S^n} (\theta_1^b \nabla_{S^n} u_e^\lambda)  \right) \frac{du_e^\lambda}{d\lambda}
\end{eqnarray}
where $\alpha:=n + b- \frac{4s}{p-1}$ and $\beta:=\frac{2s}{p-1}\left(  \frac{p+2s-1}{p-1} -n-b \right)$. Simplifying the integrals we get
\begin{eqnarray}
\label{de}\frac{d\bar E(u_e^\lambda,1)}{d\lambda}&=&  \int_{  \mathbb{R}^{n+1}_{+}\cap \partial B_1}  \theta_1^b \left( 2 \lambda^3    \left(   \frac{d^2u_e^\lambda}{d\lambda^2}\right)^2 + 4 \lambda^2  \frac{d^2u_e^\lambda}{d\lambda^2}  \frac{du_e^\lambda}{d\lambda} +2(\alpha-\beta) \lambda \left(    \frac{du_e^\lambda}{d\lambda}\right)^2 \right)
 \\&& \nonumber + \int_{  \mathbb{R}^{n+1}_{+}\cap \partial B_1}  \theta_1^b \left( \frac{\beta}{2} \frac{d^2}{d\lambda^2} \left(   \lambda (u_e^\lambda)^2    \right) -\frac{1}{2} \frac{d}{d\lambda}  \left(   \lambda^3      \frac{d}{d\lambda} \left(   \frac{d  u_e^\lambda }{d\lambda} \right)^2     \right) +\frac{\beta}{2} \frac{d}{d\lambda}(u_e^\lambda)^2 \right)
\\&& \nonumber+\int_{  \mathbb{R}^{n+1}_{+}\cap \partial B_1}   \lambda \frac{d^2u_e^\lambda}{d\lambda^2} \div_{\mathcal S^n} (\theta_1^b \nabla_{S^n} u_e^\lambda)  +3  \div_{\mathcal S^n} (\theta_1^b \nabla_{S^n} u_e^\lambda)    \frac{du_e^\lambda}{d\lambda}    -    \lambda \frac{d}{d\lambda}\left(  \div_{\mathcal S^n} (\theta_1^b \nabla_{S^n} u_e^\lambda)   \right) \frac{du_e^\lambda}{d\lambda}
\end{eqnarray}
Note that from the assumptions we have  $\alpha-\beta-1>0$, therefore the first term in the RHS of (\ref{de}) is positive that is
\begin{eqnarray*}
2 \lambda^3    \left(   \frac{d^2u_e^\lambda}{d\lambda^2}\right)^2 + 4 \lambda^2  \frac{d^2u_e^\lambda}{d\lambda^2}  \frac{du_e^\lambda}{d\lambda} +2(\alpha-\beta) \lambda \left(    \frac{du_e^\lambda}{d\lambda}\right)^2
=2 \lambda \left(  \lambda  \frac{d^2u_e^\lambda}{d\lambda^2}  +   \frac{du_e^\lambda}{d\lambda}  \right)^2 +2(\alpha-\beta-1) \lambda \left(    \frac{du_e^\lambda}{d\lambda}\right)^2 >0
\end{eqnarray*}
From this we have
\begin{eqnarray*}
\label{de1}\frac{d\bar E(u_e^\lambda,1)}{d\lambda}& \ge &  \int_{  \mathbb{R}^{n+1}_{+}\cap \partial B_1}  \theta_1^b\left( \frac{\beta}{2} \frac{d^2}{d\lambda^2} \left(   \lambda (u_e^\lambda)^2    \right) -\frac{1}{2} \frac{d}{d\lambda}  \left(   \lambda^3      \frac{d}{d\lambda} \left(   \frac{d  u_e^\lambda }{d\lambda} \right)^2     \right) +\frac{\beta}{2} \frac{d}{d\lambda}(u_e^\lambda)^2 \right)
\\&& \nonumber+\int_{  \mathbb{R}^{n+1}_{+}\cap \partial B_1}   \lambda \frac{d^2u_e^\lambda}{d\lambda^2} \div_{\mathcal S^n} (\theta_1^b \nabla_{S^n} u_e^\lambda)  +3  \div_{\mathcal S^n} (\theta_1^b \nabla_{S^n} u_e^\lambda)    \frac{du_e^\lambda}{d\lambda}    -    \lambda \frac{d}{d\lambda}\left(  \div_{\mathcal S^n} (\theta_1^b \nabla_{S^n} u_e^\lambda)   \right) \frac{du_e^\lambda}{d\lambda} \\&=:& R_1+R_2.
\end{eqnarray*}
Note that the terms appeared in $R_1$ are of the following form
\begin{eqnarray*}
 \int_{  \mathbb{R}^{n+1}_{+}\cap \partial B_1}
\theta_1^b \frac{d^2}{d\lambda^2} \left(   \lambda (u_e^\lambda)^2    \right) &=&  \frac{d^2}{d\lambda^2} \left(   \lambda^{ \frac{4s}{p-1}+2(s-1)-n } \int_{  \mathbb{R}^{n+1}_{+}\cap \partial B_\lambda}
y^b  u_e^2    \right) \\
 \int_{  \mathbb{R}^{n+1}_{+}\cap \partial B_1}
\theta_1^b \frac{d}{d\lambda}  \left[  \lambda^3      \frac{d}{d\lambda} \left(   \frac{d  u_e^\lambda }{d\lambda} \right)^2     \right] &=&  \frac{d}{d\lambda}  \left[   \lambda^3      \frac{d}{d\lambda} \left(  \lambda^{ \frac{4s}{p-1}+2s-3-n }    \int_{  \mathbb{R}^{n+1}_{+}\cap \partial B_\lambda} y^b \left[  \frac{2s}{p-1} \lambda^{-1} u_e + \frac{\partial u_e}{\partial r}      \right]^2     \right)   \right]
 \\
  \int_{  \mathbb{R}^{n+1}_{+}\cap \partial B_1}  y^b \frac{d}{d\lambda}(u_e^\lambda)^2 &=& \frac{d}{d\lambda} \left( \lambda^{  2s-3+ \frac{4s}{p-1}-n }  \int_{  \mathbb{R}^{n+1}_{+}\cap \partial B_\lambda}      y^b u_e ^2\right)
\end{eqnarray*}
We now apply integration by parts to simplify the terms appeared in $R_2$.
\begin{eqnarray*}
 R_2 &=&  \int_{  \mathbb{R}^{n+1}_{+}\cap \partial B_1}   \lambda \frac{d^2u_e^\lambda}{d\lambda^2} \div_{\mathcal S^n} (\theta_1^b \nabla_{S^n} u_e^\lambda)  +3  \div_{\mathcal S^n} (\theta_1^b \nabla_{S^n} u_e^\lambda)    \frac{du_e^\lambda}{d\lambda}    -    \lambda \frac{d}{d\lambda}\left(  \div_{\mathcal S^n} (\theta_1^b \nabla_{S^n} u_e^\lambda)   \right) \frac{du_e^\lambda}{d\lambda} \\
 &=&  \int_{  \mathbb{R}^{n+1}_{+}\cap \partial B_1} - \theta_1^b  \lambda  \nabla_{\mathcal S^n} u_e^\lambda \cdot   \nabla_{\mathcal S^n}  \frac{d^2u_e^\lambda}{d\lambda^2} - 3 \theta_1^b   \nabla_{\mathcal S^n}  u_e^\lambda \cdot    \nabla_{\mathcal S^n}  \frac{du_e^\lambda}{d\lambda}  +  \theta_1^b\lambda \left|     \nabla_{\mathcal S^n}  \frac{du_e^\lambda}{d\lambda}   \right|^2\\
&=& - \frac{\lambda}{2}  \frac{d^2}{d\lambda^2} \left(  \int_{  \mathbb{R}^{n+1}_{+}\cap \partial B_1} \theta_1^b  |\nabla_\theta u_e^\lambda |^2   \right) -\frac{3}{2} \frac{d}{d\lambda} \left(  \int_{  \mathbb{R}^{n+1}_{+}\cap \partial B_1}  \theta_1^b |\nabla_\theta u_e^\lambda |^2   \right)+2\lambda \int_{  \mathbb{R}^{n+1}_{+}\cap \partial B_1}  \theta_1^b \left|    \nabla_\theta \frac{du_e^\lambda}{d\lambda}   \right|^2
\\&=& - \frac{1}{2}  \frac{d^2}{d\lambda^2} \left(   \lambda      \int_{  \mathbb{R}^{n+1}_{+}\cap \partial B_1} \theta_1^b  |\nabla_\theta u_e^\lambda |^2   \right) -\frac{1}{2} \frac{d}{d\lambda} \left(  \int_{  \mathbb{R}^{n+1}_{+}\cap \partial B_1} \theta_1^b  |\nabla_\theta u_e^\lambda |^2   \right)+2\lambda \int_{  \mathbb{R}^{n+1}_{+}\cap \partial B_1} \theta_1^b \left|    \nabla_\theta \frac{du_e^\lambda}{d\lambda}   \right|^2
\\&\ge& - \frac{1}{2}  \frac{d^2}{d\lambda^2} \left(   \lambda      \int_{  \mathbb{R}^{n+1}_{+}\cap \partial B_1}   \theta_1^b |\nabla_\theta u_e^\lambda |^2   \right) -\frac{1}{2} \frac{d}{d\lambda} \left(  \int_{  \mathbb{R}^{n+1}_{+}\cap \partial B_1}  \theta_1^b |\nabla_\theta u_e^\lambda |^2   \right)
 \end{eqnarray*}
 Note that the two terms that appear as lower bound for $R_3$ are of the form
  \begin{eqnarray*}
 \frac{d^2}{d\lambda^2} \left(   \lambda      \int_{  \mathbb{R}^{n+1}_{+}\cap \partial B_1} \theta_1^b  |\nabla_\theta u_e^\lambda |^2   \right) &=&  \frac{d^2}{d\lambda^2} \left[  \lambda^{2s\frac{p+1}{p-1}-n}   \int_{  \mathbb{R}^{n+1}_{+}\cap \partial B_\lambda} y^b  \left(  |\nabla  u|^2-\left|     \frac{\partial u}{\partial r}\right|^2  \right)
  \right]
  \\
   \frac{d}{d\lambda} \left(  \int_{  \mathbb{R}^{n+1}_{+}\cap \partial B_1}  \theta_1^b |\nabla_\theta u_e^\lambda |^2   \right) &=&  \frac{d}{d\lambda} \left[  \lambda^{2s\frac{p+1}{p-1}-n-1}     \int_{  \mathbb{R}^{n+1}_{+}\cap \partial B_\lambda} y^b \left(  |\nabla  u|^2-\left|     \frac{\partial u}{\partial r}\right|^2  \right)      \right]
  \end{eqnarray*}

 \hfill $ \Box$

 \begin{remark}
 It is straightforward to show that $n>\frac{p+1}{p-1} 2s$ implies $n>\frac{p+4s-1}{p+2s-1}+ \frac{2s}{p-1}-b$.
 \end{remark}

 \section{Homogeneous Solutions}
 In this section, we examine homogenous solutions of the form $u=r^{-\frac{2s}{p-1}} \psi(\theta)$. Note that the methods and ideas that we apply here are  different from the ones used in \cite{ddw}.
 \begin{thm}\label{homog}
 Suppose that $u=r^{-\frac{2s}{p-1}} \psi(\theta)$ is a stable solution of (\ref{main}) then $\psi=0$ provided
 $p>\frac{n+2s}{n-2s}$ and $$ p \frac{\Gamma(\frac{n}{2}-\frac{s}{p-1}) \Gamma(s+\frac{s}{p-1})}{\Gamma(\frac{s}{p-1}) \Gamma(\frac{n-2s}{2}-\frac{s}{p-1})}>\frac{ \Gamma(\frac{n+2s}{4})^2 }{\Gamma(\frac{n-2s}{4})^2}$$
 \end{thm}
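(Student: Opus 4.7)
The strategy is to reduce stability on $\mathbb{R}^n$ to a spectral inequality on $S^{n-1}$. First, a Mellin decomposition of $(-\Delta)^s$ in polar coordinates yields, for $\beta \in (0, n-2s)$ and $\psi \in C^\infty(S^{n-1})$,
$$(-\Delta)^s[r^{-\beta}\psi(\theta)] = r^{-\beta-2s}\mathcal{L}_\beta \psi(\theta),$$
where $\mathcal{L}_\beta$ is the self-adjoint operator on $L^2(S^{n-1})$ diagonal in the spherical-harmonic basis $\{Y_k\}_{k\ge 0}$ with eigenvalues
\[
\lambda_k(\beta) = 2^{2s}\frac{\Gamma\!\left(\frac{\beta+2s+k}{2}\right)\Gamma\!\left(\frac{n-\beta+k}{2}\right)}{\Gamma\!\left(\frac{\beta+k}{2}\right)\Gamma\!\left(\frac{n-\beta-2s+k}{2}\right)}.
\]
With $\alpha = 2s/(p-1)$, the ansatz $u = r^{-\alpha}\psi$ turns (\ref{main}) into the spherical equation $\mathcal{L}_\alpha \psi = |\psi|^{p-1}\psi$; identifying $\lambda_0(\alpha)$ with the constant $A^{p-1}$ of the preceding lemma (up to the overall normalization used there), the hypothesis (\ref{conditionp}) rewrites as $p\,\lambda_0(\alpha) > \lambda_0(\tfrac{n-2s}{2}) = \Lambda_{n,s}$, the sharp fractional Hardy constant.

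I would next insert into (\ref{stability}) the Hardy-scale family of test functions
\[
\phi_R(x) = |x|^{-(n-2s)/2}\,\eta_R(\log|x|)\,g(\theta),\qquad g \in C^\infty(S^{n-1}),
\]
with $\eta_R$ a smooth logarithmic cut-off of scale $R$. Using the first step, the left-hand quadratic form equals $\bigl(\int \eta_R^2\, dt\bigr)\int_{S^{n-1}} g\,\mathcal{L}_{(n-2s)/2}g\, d\theta$ to leading order as $R\to\infty$, while the right-hand term is exactly $\bigl(\int \eta_R^2\, dt\bigr)\cdot p\int_{S^{n-1}}|\psi|^{p-1}g^2\, d\theta$. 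Dividing by the common large prefactor and letting $R\to\infty$ yields the spherical stability inequality
\[
\int_{S^{n-1}} g\,\mathcal{L}_{(n-2s)/2}g\, d\theta \ \geq\ p\int_{S^{n-1}}|\psi|^{p-1}g^2\, d\theta\qquad \forall\, g \in C^\infty(S^{n-1}).
\]

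Finally, picking $g = \psi$ and combining with the identity $\int \psi\,\mathcal{L}_\alpha\psi\, d\theta = \int|\psi|^{p+1}d\theta$ (from pairing the spherical PDE with $\psi$) gives
\[
\int_{S^{n-1}} \psi\,\bigl[\mathcal{L}_{(n-2s)/2} - p\,\mathcal{L}_\alpha\bigr]\psi\, d\theta\ \geq\ 0.
\]
Expanding $\psi = \sum_{k\ge 0} c_k Y_k$ in spherical harmonics turns this into
\[
\sum_{k\ge 0} c_k^2\,\|Y_k\|_{L^2}^2\,\bigl[\lambda_k(\tfrac{n-2s}{2}) - p\,\lambda_k(\alpha)\bigr]\ \geq\ 0.
\]
The hypothesis makes the $k=0$ bracket strictly negative. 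The principal technical step — and the chief obstacle of the proof — is to show that $k \mapsto \lambda_k(\tfrac{n-2s}{2}) - p\,\lambda_k(\alpha)$ is non-increasing on $k \geq 0$ for $0 < \alpha < \tfrac{n-2s}{2}$, equivalently that the ratio $\lambda_k(\tfrac{n-2s}{2})/\lambda_k(\alpha)$ is maximized at $k=0$. This is a monotonicity statement about ratios of shifted Gamma functions, and it should follow from the log-convexity of $\log\Gamma$ (digamma monotonicity). Once it is in hand, every bracket is strictly negative, every $c_k$ must vanish, and $\psi \equiv 0$.
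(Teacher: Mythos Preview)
Your overall strategy matches the paper's: reduce the PDE to a spherical identity, test stability with the Hardy-scale function $r^{-(n-2s)/2}\psi(\theta)\,\eta(\log r)$, and combine the two to force $\psi\equiv0$. Where you diverge is in the final step. You diagonalize the spherical operator on spherical harmonics and are then obliged to show $\lambda_k\bigl(\tfrac{n-2s}{2}\bigr)-p\,\lambda_k(\alpha)<0$ for \emph{every} $k\ge0$; you correctly flag this as the main obstacle and leave it to a digamma computation you do not carry out. That is a genuine gap.

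The paper bypasses it by staying in the kernel representation rather than passing to the spectrum. Writing
\[
\mathcal{L}_\beta\psi(\theta)=A(\beta)\,\psi(\theta)+\int_{S^{n-1}}K_\beta(\langle\theta,\sigma\rangle)\,(\psi(\theta)-\psi(\sigma))\,d\sigma,\qquad
K_\beta(c)=\int_0^1\frac{t^{\,n-1-\beta}+t^{\,2s-1+\beta}}{(1+t^2-2tc)^{(n+2s)/2}}\,dt,
\]
the combined inequality becomes
\[
\bigl(\Lambda_{n,s}-p\,A_{n,s}\bigr)\int_{S^{n-1}}\psi^2
+\iint\bigl(K_{(n-2s)/2}-p\,K_{2s/(p-1)}\bigr)(\langle\theta,\sigma\rangle)\,(\psi(\theta)-\psi(\sigma))^2\,d\theta\,d\sigma\ \ge\ 0.
\]
The constant bracket is negative by hypothesis. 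For the kernel, one differentiates: $\partial_\beta K_\beta(c)=\int_0^1(\ln t)\bigl(t^{\,2s-1+\beta}-t^{\,n-1-\beta}\bigr)\,(1+t^2-2tc)^{-(n+2s)/2}\,dt<0$ since $\ln t<0$ and $2s-1+\beta<n-1-\beta$ for $\beta<\tfrac{n-2s}{2}$. Thus $K_{(n-2s)/2}<K_{2s/(p-1)}$ \emph{pointwise}, the double integral is nonpositive, and $\psi\equiv0$ follows without any spectral expansion.

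This elementary kernel monotonicity in fact proves your conjectured eigenvalue inequality as a corollary: the pointwise bound gives $\mu_k\bigl(\tfrac{n-2s}{2}\bigr)\le\mu_k(\alpha)$ for the nonlocal part, whence $\lambda_k\bigl(\tfrac{n-2s}{2}\bigr)-p\,\lambda_k(\alpha)\le(\Lambda_{n,s}-p\,A_{n,s})+(1-p)\,\mu_k(\alpha)<0$. So the route through Gamma-function asymptotics is unnecessary; the kernel formulation is both simpler and stronger.
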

 \begin{proof}
 Since $u$ satisfies (\ref{main}), the function $\psi$ satisfies (we omit the P.V.)
 \begin{eqnarray*}
 |x|^{-\frac{2ps}{p-1}} \psi^p(\theta)
&=& \int \frac{  |x|^{-\frac{2s}{p-1}} \psi(\theta) -|y|^{-\frac{2s}{p-1}} \psi(\sigma) }{ |x-y|^{n+2s}} dy \\&=&
  \int \frac{  |x|^{-\frac{2s}{p-1}} \psi(\theta) -r^{-\frac{2s}{p-1}} t^{-\frac{2s}{p-1}} \psi(\sigma) }{ (t^2+1- 2t <\theta,\sigma>)^{\frac{n+2s}{2}} |x|^{n+2s}} |x|^n t^{n-1} dt d\sigma \text{ \ where \ } |y|=rt
\\ &=&  |x|^{-\frac{2ps}{p-1}}  [  \int \frac{  \psi(\theta) - t^{-\frac{2s}{p-1}} \psi(\theta) }{ (t^2+1- 2t <\theta,\sigma>)^{\frac{n+2s}{2}} }  t^{n-1} dt d\sigma \\&&+ \int \frac{  t^{-\frac{2s}{p-1}} (\psi(\theta) -  \psi(\sigma) }{ (t^2+1- 2t <\theta,\sigma>)^{\frac{n+2s}{2}} }  t^{n-1} dt d\sigma]
\end{eqnarray*}
 We now drop $|x|^{-\frac{2ps}{p-1}}$ and get
 \begin{equation}\label{Ans}
 \psi(\theta) A_{n,s}(\theta) + \int_{\mathbb S^{n-1}} K_{\frac{2s}{p-1}} (<\theta,\sigma>) (\psi(\theta)-\psi(\sigma)) d \sigma= \psi^p(\theta)
 \end{equation}
 where $$A_{n,s}:=\int_0^\infty \int_{\mathbb S^{n-1}} \frac{1-t^{  -\frac{2s}{p-1} }}{    (t^2+1- 2t <\theta,\sigma>)^{\frac{n+2s}{2}} } t^{n-1} d\sigma dt$$ and $$ K_{\frac{2s}{p-1}}(<\theta,\sigma>):=\int_0^\infty \frac{t^{  n-1-\frac{2s}{p-1} }}{    (t^2+1- 2t <\theta,\sigma>)^{\frac{n+2s}{2}} }  dt  $$
 Note that
  \begin{eqnarray*}
K_{\frac{2s}{p-1}}(<\theta,\sigma>)&=& \int_0^1 \frac{t^{  n-1-\frac{2s}{p-1} }}{    (t^2+1- 2t <\theta,\sigma>)^{\frac{n+2s}{2}} }  dt +\int_1^\infty \frac{t^{  n-1-\frac{2s}{p-1} }}{    (t^2+1- 2t <\theta,\sigma>)^{\frac{n+2s}{2}} }  dt
\\&=& \int_0^1 \frac{t^{  n-1-\frac{2s}{p-1}} + t^{  2s-1+\frac{2s}{p-1}}}{    (t^2+1- 2t <\theta,\sigma>)^{\frac{n+2s}{2}} }  dt
 \end{eqnarray*}
We now set $K_{\alpha}(<\theta,\sigma>)= \int_0^1 \frac{t^{  n-1+\alpha} + t^{  2s-1+\alpha}}{    (t^2+1- 2t <\theta,\sigma>)^{\frac{n+2s}{2}} }  dt$. The most important property of the $K_\alpha$ is that $K_\alpha$ is decreasing in $\alpha$. This can be seen by the following elementary calculations
   \begin{eqnarray*}
\partial_\alpha K_\alpha&=& \int_0^1 \frac{-t^{  n-1-\alpha} \ln t + t^{  2s-1+\alpha} \ln t}{    (t^2+1- 2t <\theta,\sigma>)^{\frac{n+2s}{2}} }  dt
\\&=& \int_0^1 \frac{\ln t (-t^{  n-1-\alpha} + t^{  2s-1+\alpha})}{    (t^2+1- 2t <\theta,\sigma>)^{\frac{n+2s}{2}} }  dt<0
 \end{eqnarray*}
For the last part we have used the fact that for $p>\frac{n+2s}{n-2s}$ we have $2s-1+\alpha<n-1-\alpha$.

From (\ref{Ans}) we get the following
 \begin{equation}\label{Ans2}
\int_{\mathbb S^{n-1}} \psi^2(\theta) A_{n,s} + \int_{\mathbb S^{n-1}} K_{\frac{2s}{p-1}} (<\theta,\sigma>) (\psi(\theta)-\psi(\sigma))^2 d\theta d \sigma= \int_{\mathbb S^{n-1}} \psi^{p+1}(\theta) d\theta
 \end{equation}
 We set a standard cut-off function $\eta_\epsilon\in C_c^1(\mathbb R_+)$ at the origin and at infinity that is $\eta_\epsilon=1$ for $\epsilon<r<\epsilon^{-1}$ and $\eta_\epsilon=0$ for either $r<\epsilon/2$ or $r>2/\epsilon$. We test the stability (\ref{stability}) on the function $\phi(x)=r^{-\frac{n-2s}{2}} \psi(\theta) \eta_\epsilon(r)$.

 Note that
    \begin{eqnarray*}
\int_{\mathbb R^n} \frac{\phi(x)-\phi(y)}{|x-y|^{n+2s}}  dy = \int \int_{\mathbb S^{n-1}} \frac{r^{-\frac{n-2s}{2} } \psi(\theta)\eta(r) -|y|^{-\frac{n-2s}{2} } \psi(\sigma) \eta(|y|)}{    (r^2+|y|^2- 2r |y| <\theta,\sigma>)^{\frac{n+2s}{2}} }  d\sigma d(|y|)
\end{eqnarray*}
Now set $|y|=rt$ then
 \begin{eqnarray*}
\int_{\mathbb R^n} \frac{\phi(x)-\phi(y)}{|x-y|^{n+2s}}  dy &=& r^{-\frac{n}{2} -s} \int_0^\infty \int_{\mathbb S^{n-1}} \frac{\psi(\theta)\eta(r) - t^{-\frac{n-2s}{2} } \psi(\sigma) \eta(rt)}{  (t^2+1- 2t <\theta,\sigma>)^{\frac{n+2s}{2}  }} t^{n-1} dt d\sigma
\\&=& r^{-\frac{n}{2} -s} \int \int_{\mathbb S^{n-1}} \frac{\psi(\theta)\eta(r) - t^{-\frac{n-2s}{2} } \psi(\sigma) \eta(r) +t^{-\frac{n-2s}{2} } (   \eta(r) \psi(\theta)-\eta(rt) \psi(\sigma)  )  }{  (t^2+1- 2t <\theta,\sigma>)^{\frac{n+2s}{2}  }} t^{n-1} dt d\sigma
\\&=& r^{-\frac{n}{2} -s} \eta(r) \psi(\theta) \int_0^\infty \int_{\mathbb S^{n-1}} \frac{1-t^{  \frac{n-2s}{2} }}{    (t^2+1- 2t <\theta,\sigma>)^{\frac{n+2s}{2}} }   t^{n-1} dt d\sigma
\\&&+ r^{-\frac{n}{2} -s} \eta(r)  \int_0^\infty \int_{\mathbb S^{n-1}} \frac{t^{ n-1 -\frac{n-2s}{2} } (\psi(\theta)-\psi(\sigma))}{    (t^2+1- 2t <\theta,\sigma>)^{\frac{n+2s}{2}} }  dt d\sigma
\\&&+  r^{-\frac{n}{2} -s} \int_0^\infty \int_{\mathbb S^{n-1}} \frac{t^{n-1  -\frac{n-2s}{2} } (\eta(r)-\eta(rt))\psi(\sigma) }{    (t^2+1- 2t <\theta,\sigma>)^{\frac{n+2s}{2}} } dt d\sigma
\end{eqnarray*}
Define $\Lambda_{n,s} :=\int_0^\infty  \int_{\mathbb S^{n-1}}\frac{1-t^{  \frac{n-2s}{2} }}{    (t^2+1- 2t <\theta,\sigma>)^{\frac{n+2s}{2}} }  t^{n-1} d\sigma dt$. Therefore,
 \begin{eqnarray*}
\int_{\mathbb R^n} \frac{\phi(x)-\phi(y)}{|x-y|^{n+2s}}  dy &=&r^{-\frac{n}{2} -s} \eta(r) \psi(\theta) \Lambda_{n,s}
\\&&+ r^{-\frac{n}{2} -s} \eta(r)  \int_{\mathbb S^{n-1}} K_{\frac{n-2s}{2}}(<\theta,\sigma>) (\psi(\theta)-\psi(\sigma)) d\sigma
\\&&+r^{-\frac{n}{2} -s} \int_0^\infty \int_{\mathbb S^{n-1}} \frac{t^{  -\frac{n-2s}{2} } (\eta(r)-\eta(rt))\psi(\sigma) }{    (t^2+1- 2t <\theta,\sigma>)^{\frac{n+2s}{2}} } dt d\sigma
\end{eqnarray*}
 Applying the above, we compute the left-hand side of the stability inequality (\ref{stability}),
\begin{eqnarray}\label{stable1}
\nonumber\int_{\mathbb R^n}\int_{\mathbb R^n} \frac{ (\phi(x)-\phi(y))^2}{|x-y|^{n+2s}} dx dy &=& 2\int_{\mathbb R^n}\int_{\mathbb R^n} \frac{ (\phi(x)-\phi(y))\phi(x)}{|x-y|^{n+2s}} dx dy
\nonumber\\&=&
2\int_0^\infty r^{-1} \eta^2(r) dr \int_{\mathbb S^{n-1}} \psi^2  \Lambda_{n,s} d\theta
\nonumber\\&&+ 2\int_0^\infty r^{-1} \eta^2(r) dr \int_{\mathbb S^{n-1}} K_{\frac{n-2s}{2}}(<\theta,\sigma>) (\psi(\theta)-\psi(\sigma))^2 d\sigma d\theta
\nonumber\\&&+2 \int_0^\infty  \left[ \int_0^\infty   r^{-1} \eta(r) (\eta(r)-\eta(rt)) dr  \right] \int_{\mathbb S^{n-1}} \int_{\mathbb S^{n-1}} \frac{   t^{ n- 1-\frac{n-2s}{2} } \psi(\sigma)\psi(\theta)  }{ (t^2+1- 2t <\theta,\sigma>)^{\frac{n+2s}{2}}}  d\sigma d\theta dt
 \end{eqnarray}
We now compute the second term in the stability inequality (\ref{stability}) for the test function $\phi(x)=r^{-\frac{n-2s}{2}} \psi(\theta) \eta(r)$ and $u=r^{-\frac{2s}{p-1}} \psi(\theta)$,
\begin{eqnarray}\label{stable2}
\nonumber p \int_0^\infty |u|^{p-1} \phi^2 &=& p \int_0^\infty r^{-2s} r^{-(n-2s)} \psi^{p+1} \eta^2(r) dr \\&=&
p \int_0^\infty r^{-1} \eta^2(r)  dr \int_{\mathbb S^{n-1}} \psi^{p+1}  (\theta) d\theta
  \end{eqnarray}
Due to the definition of the $\eta_\epsilon$, we have  $\int_0^\infty r^{-1} \eta_\epsilon^2(r)  dr=\ln (2/\epsilon) +O(1)$. Note that this term appears in both terms of the stability inequality that we computed  in (\ref{stable1}) and (\ref{stable2}). We now claim that
$$f_\epsilon(t):=\int_0^\infty   r^{-1} \eta_\epsilon(r) (\eta_\epsilon(r)-\eta_\epsilon(rt)) dr=O(\ln t)$$
Note that $\eta_\epsilon(rt)=1$ for $\frac\epsilon t<r<\frac{1}{t\epsilon}$ and $\eta_\epsilon(rt)=0$ for either $r<\frac{\epsilon}{2t}$ or $r>\frac{2}{t\epsilon}$. Now consider various ranges of value of $t\in (0,\infty)$ to compare the support of $\eta_\epsilon(r)$ and  $\eta_\epsilon(rt)$. From the definition of $\eta_\epsilon$, we have
$$f_\epsilon(t)=\int_{\frac{\epsilon}{2}}^{ \frac{2}{\epsilon}    }   r^{-1} \eta_\epsilon(r) (\eta_\epsilon(r)-\eta_\epsilon(rt)) dr$$
In what follows we consider a few cases to explain  the claim. For example when  $\epsilon< \frac{\epsilon}{t}< \frac{1}{\epsilon}$ then
$$f_\epsilon(t)\approx \int_{\frac{\epsilon}{2}}^{ \frac{\epsilon}{t}    }   r^{-1} dr +  \int_{\frac{1}{\epsilon}}^{ \frac{2}{\epsilon t}    }   r^{-1} dr \approx \ln t $$
Now consider the case $\frac{1}{\epsilon}< \frac{\epsilon}{t}< \frac{1}{\epsilon}$ then $t \approx \epsilon^2$. So,
$$f_\epsilon(t)\approx \int_{\frac{\epsilon}{2}}^{ \frac{\epsilon}{t}    }   r^{-1} dr +  \int_{\frac{\epsilon}{t}}^{ \frac{2}{\epsilon }    }   r^{-1} dr \approx \ln t + \ln \epsilon \approx \ln t$$
Other cases can be treated similarly. From this one can see that
\begin{eqnarray}\label{stable2}
&&\int_0^\infty  \left[ \int_0^\infty   r^{-1} \eta(r) (\eta(r)-\eta(rt)) dr  \right] \int_{\mathbb S^{n-1}}\int_{\mathbb S^{n-1}} \frac{   t^{ n- 1-\frac{n-2s}{2} }   }{ (t^2+1- 2t <\theta,\sigma>)^{\frac{n+2s}{2}}} \psi(\sigma)\psi(\theta) d\sigma d\theta dt
\\&\approx&  \int_{\mathbb S^{n-1}} \int_{\mathbb S^{n-1}} \int_0^\infty \frac{   t^{ n- 1-\frac{n-2s}{2} }  \ln t }{ (t^2+1- 2t <\theta,\sigma>)^{\frac{n+2s}{2}}} \psi(\sigma)\psi(\theta)dt  d\sigma d\theta
\\&=& O(1)
  \end{eqnarray}
 Collecting higher order terms  of the stability inequality we get
 \begin{equation}
  \Lambda_{n,s} \int_{\mathbb S^{n-1}} \psi^2 + \int_{\mathbb S^{n-1}} K_{\frac{n-2s}{2}}(<\theta,\sigma>) (\psi(\theta)-\psi(\sigma))^2 d\sigma \ge p  \int_{\mathbb S^{n-1}} \psi^{p+1}
 \end{equation}
 From this and (\ref{Ans2}) we obtain
  \begin{eqnarray*}
  ( \Lambda_{n,s} -p A_{n,s}) \int_{\mathbb S^{n-1}} \psi^2 + \int_{\mathbb S^{n-1}} (K_{\frac{n-2s}{2}} - pK_{\frac{2s}{p-1}}  )(<\theta,\sigma>) (\psi(\theta)-\psi(\sigma))^2 d\sigma \ge 0
  \end{eqnarray*}
Note that $K_\alpha$ is decreasing in $\alpha$. This implies $K_{\frac{n-2s}{2}} < K_{\frac{2s}{p-1}}$ for $p>\frac{n+2s}{n-2s}$. So, $K_{\frac{n-2s}{2}} - pK_{\frac{2s}{p-1}} <0$. On the other hand the assumption of the theorem implies that $\Lambda_{n,s} -p A_{n,s}<0$. Therefore, $\psi=0$.

 \end{proof}

 \begin{remark} Note that in this section we never used the fact that $1<s<2$. So this proof holds for a larger range of the parameter $s$.
\end{remark}

 \section{Energy Estimates}
 In this section, we provide some estimates for solutions of (\ref{main}). These estimates are needed in the next section when we perform a blow-down analysis argument.  The methods and ideas provided in this section are strongly motivated by \cite{ddww,ddw}.

 \begin{lemma}\label{iden} The following identities hold for any functions $\zeta$ and $\eta$,
 \begin{eqnarray}
 \Delta_b \zeta \Delta_b(\zeta \eta^2)- |\Delta_b(\zeta\eta)|^2 &=& -\zeta^2 |\Delta_b \eta|^2 +2\zeta \Delta_b \zeta |\nabla \eta|^2 -4 |\nabla \zeta\cdot\nabla \eta|^2 -4 \zeta\Delta_b\eta \nabla \zeta\cdot\nabla \eta
 \\ \Delta_b (\zeta \eta) &=& \eta \Delta_b \zeta  +\zeta \Delta_b \eta +2\nabla\zeta\cdot\nabla\eta
 \end{eqnarray}
 \end{lemma}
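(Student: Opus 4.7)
The plan is to prove the two identities by direct computation. I would start with the second identity, since it is the standard Leibniz rule for the weighted operator $\Delta_b$ and will be used repeatedly in verifying the first one.

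First, from the definition $\Delta_b w = \Delta w + \tfrac{b}{y} w_y$, the second identity is immediate: the usual product rule gives $\Delta(\zeta\eta) = \eta\Delta\zeta + \zeta\Delta\eta + 2\nabla\zeta\cdot\nabla\eta$, and the first-order piece splits linearly, $\tfrac{b}{y}(\zeta\eta)_y = \eta \tfrac{b}{y}\zeta_y + \zeta\tfrac{b}{y}\eta_y$. Adding these gives $\Delta_b(\zeta\eta) = \eta\Delta_b\zeta + \zeta\Delta_b\eta + 2\nabla\zeta\cdot\nabla\eta$.

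For the first identity I would apply the second identity twice. Using it with the pair $(\zeta,\eta^2)$ and then with the pair $(\eta,\eta)$ to compute $\Delta_b(\eta^2) = 2\eta\Delta_b\eta + 2|\nabla\eta|^2$, one obtains
\begin{equation*}
\Delta_b(\zeta\eta^2) = \eta^2\Delta_b\zeta + 2\zeta\eta\,\Delta_b\eta + 2\zeta|\nabla\eta|^2 + 4\eta\,\nabla\zeta\cdot\nabla\eta.
\end{equation*}
Multiplying by $\Delta_b\zeta$ yields
\begin{equation*}
\Delta_b\zeta\,\Delta_b(\zeta\eta^2) = \eta^2(\Delta_b\zeta)^2 + 2\zeta\eta\,\Delta_b\zeta\,\Delta_b\eta + 2\zeta\,\Delta_b\zeta\,|\nabla\eta|^2 + 4\eta\,\Delta_b\zeta\,\nabla\zeta\cdot\nabla\eta.
\end{equation*}
Next, squaring the Leibniz expression $\Delta_b(\zeta\eta) = \eta\Delta_b\zeta + \zeta\Delta_b\eta + 2\nabla\zeta\cdot\nabla\eta$ produces
\begin{equation*}
|\Delta_b(\zeta\eta)|^2 = \eta^2(\Delta_b\zeta)^2 + \zeta^2(\Delta_b\eta)^2 + 4|\nabla\zeta\cdot\nabla\eta|^2 + 2\zeta\eta\,\Delta_b\zeta\,\Delta_b\eta + 4\eta\,\Delta_b\zeta\,\nabla\zeta\cdot\nabla\eta + 4\zeta\,\Delta_b\eta\,\nabla\zeta\cdot\nabla\eta.
\end{equation*}
Subtracting the two expressions, the terms $\eta^2(\Delta_b\zeta)^2$, $2\zeta\eta\Delta_b\zeta\Delta_b\eta$, and $4\eta\Delta_b\zeta\,\nabla\zeta\cdot\nabla\eta$ cancel, leaving exactly
\begin{equation*}
-\zeta^2|\Delta_b\eta|^2 + 2\zeta\,\Delta_b\zeta\,|\nabla\eta|^2 - 4|\nabla\zeta\cdot\nabla\eta|^2 - 4\zeta\,\Delta_b\eta\,\nabla\zeta\cdot\nabla\eta,
\end{equation*}
which is the right-hand side of the first identity.

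Conceptually there is no obstacle: both statements are pointwise algebraic identities that reduce to the Leibniz rule for the second-order operator $\Delta_b$ (which behaves as nicely as the ordinary Laplacian because the weight $\tfrac{b}{y}\partial_y$ is a first-order derivation). The only real risk is a bookkeeping error when expanding $|\Delta_b(\zeta\eta)|^2$ and matching it against $\Delta_b\zeta\,\Delta_b(\zeta\eta^2)$, so I would carry out the cancellation carefully term by term, grouping the six monomials of $|\Delta_b(\zeta\eta)|^2$ against the four monomials of $\Delta_b\zeta\,\Delta_b(\zeta\eta^2)$ before reading off the surviving four terms.
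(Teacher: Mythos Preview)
Your proof is correct and is exactly the elementary computation the paper has in mind; the paper itself omits the proof entirely, stating only that it is elementary. Your careful term-by-term expansion via the Leibniz rule for $\Delta_b$ is precisely the intended verification.
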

 \begin{proof}
 We omit the proof, since it is elementary.
\end{proof}
 We apply the given identities to get some energy estimates.
 \begin{lemma}\label{estimate}  Let $u$ be a solution of (\ref{main}) that is stable outside a ball $B_{R_0}$ and $u_e$ satisfies (\ref{maine}). Then there exists a positive constant $C$  such that
  \begin{eqnarray}\label{estimateeq}
\int_{\partial \mathbb R_+^{n+1}}  |u_e|^{p+1} \eta^2 + \int_{ \mathbb R_+^{n+1}}  y^b |\Delta_b u_e|^2 \eta^2 &\le& C \int_{\mathbb R_+^{n+1}} y^b u_e^2 \left( |\Delta_b \eta|^2 +|\Delta_b |\nabla \eta|^2|+|\nabla\eta\cdot\nabla\Delta_b\eta|\right)  \\&&+ C \int_{\mathbb R_+^{n+1}} y^b  |u_e| |\Delta_b u_e| |\nabla \eta|^2
 \end{eqnarray}
 \end{lemma}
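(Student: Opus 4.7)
The plan is to combine the stability inequality (\ref{stability}) with the algebraic identity in Lemma \ref{iden}, in the spirit of \cite{ddww,ddw}. Take a cutoff $\eta=\eta(x,y)$ on $\mathbb R^{n+1}_+$, arranged so that $\partial_y\eta|_{y=0}=0$, and test (\ref{stability}) with the trace $\phi:=u\,\eta(\cdot,0)$. Yang's theorem identifies the Gagliardo seminorm of $\phi$ with $C_{n,s}\int y^b|\Delta_b\phi_e|^2$, and the variational minimality of the $\Delta_b^2$-harmonic extension $\phi_e$ among competitors with the prescribed Dirichlet and weighted Neumann data allows us to replace $\phi_e$ by $w:=u_e\eta$ (which has $w|_{y=0}=\phi$ and $\lim y^b\partial_y w=0$), yielding
\[
p\int_{\partial\mathbb R^{n+1}_+}|u_e|^{p+1}\eta^2 \;\lesssim\; \int_{\mathbb R^{n+1}_+} y^b\,|\Delta_b(u_e\eta)|^2.
\]

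Next, apply Lemma \ref{iden} with $\zeta=u_e$ to expand $|\Delta_b(u_e\eta)|^2$ as $\Delta_b u_e\,\Delta_b(u_e\eta^2)$ plus the four residual terms $u_e^2|\Delta_b\eta|^2$, $u_e\Delta_b u_e|\nabla\eta|^2$, $|\nabla u_e\cdot\nabla\eta|^2$, $u_e\Delta_b\eta\,\nabla u_e\cdot\nabla\eta$. Integrating the leading piece against $y^b$ and integrating by parts twice on $\mathbb R^{n+1}_+$ — using $\Delta_b^2 u_e=0$ in the interior and the boundary data $\lim y^b\partial_y u_e=0$ and $\lim y^b\partial_y\Delta_b u_e=C_{n,s}|u|^{p-1}u$ from (\ref{maine}) — gives the identity
\[
\int_{\mathbb R^{n+1}_+} y^b\Delta_b u_e\,\Delta_b(u_e\eta^2) = C_{n,s}\int_{\partial\mathbb R^{n+1}_+}|u_e|^{p+1}\eta^2.
\]
Substituting, the two $|u_e|^{p+1}\eta^2$ contributions combine (with the matching constants arising from Yang's extension identity on the stability side and from the Dirichlet-to-Neumann condition on the IBP side), so that $\int|u_e|^{p+1}\eta^2$ is bounded above by the four residual integrals listed above.

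The first two of these already appear on the right-hand side of Lemma \ref{estimate}. For the third, I would bound $|\nabla u_e\cdot\nabla\eta|^2\le |\nabla u_e|^2|\nabla\eta|^2$ and integrate by parts via $u_e\nabla u_e=\tfrac12\nabla u_e^2$ to obtain
\[
\int y^b|\nabla u_e|^2|\nabla\eta|^2 = -\int y^b u_e\Delta_b u_e|\nabla\eta|^2 + \tfrac12\int y^b u_e^2\,\Delta_b|\nabla\eta|^2,
\]
which produces the $|\Delta_b|\nabla\eta|^2|$ term of the right-hand side. For the mixed fourth term, rewrite $u_e\nabla u_e\cdot\nabla\eta=\tfrac12\nabla u_e^2\cdot\nabla\eta$ and integrate by parts against $y^b\Delta_b\eta$ to get
\[
\int y^b u_e\Delta_b\eta\,\nabla u_e\cdot\nabla\eta = -\tfrac12\int y^b u_e^2(\Delta_b\eta)^2 - \tfrac12\int y^b u_e^2\,\nabla\eta\cdot\nabla\Delta_b\eta,
\]
yielding the $|\nabla\eta\cdot\nabla\Delta_b\eta|$ contribution. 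To recover the second left-hand quantity $\int y^b|\Delta_b u_e|^2\eta^2$, use the identity $\Delta_b(u_e\eta)=\eta\Delta_b u_e+u_e\Delta_b\eta+2\nabla u_e\cdot\nabla\eta$ together with the elementary lower bound $a^2\le 2(a+b+c)^2+4b^2+4c^2$ (applied with $a=\eta\Delta_b u_e$) to reduce it to $\int y^b|\Delta_b(u_e\eta)|^2$ plus quantities already controlled, absorbing any remaining $\int y^b|u_e||\Delta_b u_e||\nabla\eta|^2$ factor through Young's inequality if needed.

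The main obstacle is bookkeeping the boundary contributions at $\{y=0\}$: with $b=3-2s\in(-1,1)$ the weight $y^b$ is either degenerate or mildly singular, so each IBP must be verified by checking that the corresponding $y^b\partial_y(\cdot)$-flux either vanishes by the prescribed boundary data of (\ref{maine}), by the normalization $\partial_y\eta|_{y=0}=0$, or by the compact support of $\eta$. Equally crucial is the constant matching: one has to confirm that the $C_{n,s}$ from Yang's energy identity (used on the stability side) agrees with the $C_{n,s}$ from the Dirichlet-to-Neumann boundary condition (used in the IBP identity) so that the two $|u_e|^{p+1}\eta^2$ contributions do not cancel but leave a strictly positive coefficient on the left, which is what makes the estimate closable.
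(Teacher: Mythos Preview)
Your proposal is correct and follows essentially the same route as the paper: integrate by parts using $\Delta_b^2 u_e=0$ and the boundary data to obtain $\int y^b\Delta_b u_e\,\Delta_b(u_e\eta^2)=C_{n,s}\int_{\partial\mathbb R^{n+1}_+}|u_e|^{p+1}\eta^2$, invoke the algebraic identity of Lemma~\ref{iden} to pass to $\int y^b|\Delta_b(u_e\eta)|^2$ plus residuals, bring in stability via the test function $u\eta$ and the minimality of the extension, then reduce the residual gradient terms by the two further integrations by parts you describe and recover $\int y^b\eta^2|\Delta_b u_e|^2$ from the product rule for $\Delta_b(u_e\eta)$. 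The paper does exactly this, in the same order; your explicit remarks on the boundary fluxes and the matching of the $C_{n,s}$ constants are points the paper treats implicitly.
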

 \begin{proof} Multiply the equation with $y^{b} u\eta^2 $ where $\eta$ is a test function to get
  \begin{eqnarray*}
0&=&\int_{ \mathbb R_+^{n+1}}  y^b (u_e\eta^2) \Delta^2_b u_e = \int_{ \mathbb R_+^{n+1}}   u_e\eta^2 \div(y^b \nabla\Delta_b u_e) \\&=& - \int_{ \mathbb R_+^{n+1}} y^b  \nabla (u_e\eta^2)\cdot  \nabla\Delta_b u_e + \int_{ \partial\mathbb R_+^{n+1}} \lim_{y\to 0} y^b \partial_y (\Delta_b u_e) (u_e\eta^2)
\\&=& - \int_{ \mathbb R_+^{n+1}} y^b  \nabla (u_e\eta^2)\cdot  \nabla\Delta_b u_e + C_{n,s} \int_{ \partial\mathbb R_+^{n+1}} |u_e|^{p+1} \eta^2
 \end{eqnarray*}
From this we get
  \begin{equation}\label{delta}
C_{n,s} \int_{ \partial\mathbb R_+^{n+1}} |u_e|^{p+1} \eta^2= \int_{ \mathbb R_+^{n+1}} y^b \Delta_b u_e \Delta_b(u_e\eta^2)
 \end{equation}
Apply Lemma \ref{iden} for $\zeta=u_e$ we get
  \begin{eqnarray}\label{up+1}
\nonumber C_{n,s} \int_{ \partial\mathbb R_+^{n+1}} |u_e|^{p+1} \eta^2 &=& \int_{ \mathbb R_+^{n+1}} y^b |\Delta_b(u_e\eta)|^2  - \int_{ \mathbb R_+^{n+1}} y^b u_e^2 |\Delta_b \eta|^2 +2\int_{ \mathbb R_+^{n+1}} y^b u_e \Delta_b u_e |\nabla \eta|^2 \\&& -4\int_{ \mathbb R_+^{n+1}} y^b  |\nabla u_e\cdot\nabla \eta|^2 -4 \int_{ \mathbb R_+^{n+1}} y^b u_e\Delta_b\eta \nabla u_e\cdot\nabla \eta
 \end{eqnarray}
Note that the last integral is
  \begin{eqnarray*}
-4 \int_{ \mathbb R_+^{n+1}} y^b u_e\Delta_b\eta \nabla u_e\cdot\nabla \eta &=& -2 \int_{ \mathbb R_+^{n+1}} y^b \Delta_b\eta \nabla (u_e^2) \cdot\nabla \eta \\ &=&2 \int_{ \mathbb R_+^{n+1}} u_e^2 \div(y^b \Delta_b\eta\nabla \eta)= 2 \int_{ \mathbb R_+^{n+1}} y^b u_e^2 (|\Delta_b \eta|^2 + \nabla\eta\cdot\nabla\Delta_b \eta)
 \end{eqnarray*}
From this and (\ref{up+1}) we get
\begin{eqnarray}\label{up+1-}
C_{n,s} \int_{ \partial\mathbb R_+^{n+1}} |u_e|^{p+1} \eta^2 &=&  \int_{ \mathbb R_+^{n+1}} y^b |\Delta_b(u_e\eta)|^2   +2\int_{ \mathbb R_+^{n+1}} y^b u_e \Delta_b u_e |\nabla \eta|^2 \\&& -4\int_{ \mathbb R_+^{n+1}} y^b  |\nabla u_e\cdot\nabla \eta|^2 + \int_{ \mathbb R_+^{n+1}} y^b u_e^2 (|\Delta_b \eta|^2 + 2\nabla\eta\cdot\nabla\Delta_b \eta )
 \end{eqnarray}
We now apply the stability inequality  (\ref{stability}) for $\phi=u\eta$ to get
\begin{equation}\label{stability1}
p \int_{\mathbb R^n} |u|^{p+1} \eta^2 \le  \int_{ \mathbb R_+^{n+1}} y^b |\Delta_b(u_e\eta)|^2
\end{equation}
From (\ref{stability1}) and (\ref{up+1-}) we obtain
\begin{eqnarray}\label{up+1=}
\nonumber \int_{ \partial\mathbb R_+^{n+1}} |u_e|^{p+1} \eta^2 + \int_{ \mathbb R_+^{n+1}} y^b |\Delta_b(u_e\eta)|^2 &\le & C  \int_{ \mathbb R_+^{n+1}} y^b |u_e| |\Delta_b u_e| |\nabla \eta|^2 + C \int_{ \mathbb R_+^{n+1}} y^b  |\nabla u_e|^2 |\nabla \eta|^2  \\&&+C \int_{ \mathbb R_+^{n+1}} y^b u_e^2 (|\Delta_b \eta|^2 + |\nabla\eta\cdot\nabla\Delta_b \eta| )
 \end{eqnarray}
Note that from Lemma \ref{iden} we have $\Delta_b (u_e \eta) = \eta \Delta_b u_e  +u_e \Delta_b \eta +2\nabla u_e\cdot\nabla\eta$. So from (\ref{up+1=}) we get
\begin{eqnarray}\label{up+1+}
 \int_{ \partial\mathbb R_+^{n+1}} |u_e|^{p+1} \eta^2 + \int_{ \mathbb R_+^{n+1}} y^b |\Delta_b u_e|^2 \eta^2
&\le & C  \int_{ \mathbb R_+^{n+1}} y^b |u_e| |\Delta_b u_e| |\nabla \eta|^2 + C \int_{ \mathbb R_+^{n+1}} y^b  |\nabla u_e|^2 |\nabla \eta|^2  \\&&+C \int_{ \mathbb R_+^{n+1}} y^b u_e^2 (|\Delta_b \eta|^2 + |\nabla\eta\cdot\nabla\Delta_b \eta| )
 \end{eqnarray}
Note also that $2  |\nabla u_e|^2= \Delta_b (u_e^2) - 2u_e \Delta_b u_e$. Therefore,
\begin{eqnarray}\label{}
2  \int_{ \mathbb R_+^{n+1}} y^b  |\nabla u_e|^2 |\nabla \eta|^2 &=&  \int_{ \mathbb R_+^{n+1}} y^b  |\nabla \eta|^2 \Delta_b (u_e^2) - 2 \int_{ \mathbb R_+^{n+1}}y^b  u_e \Delta_b u_e|\nabla \eta|^2
\\&=&  \int_{ \mathbb R_+^{n+1}} y^b  u_e^2  \Delta_b |\nabla \eta|^2  - 2 \int_{ \mathbb R_+^{n+1}} y^b u_e \Delta_b u_e|\nabla \eta|^2
 \end{eqnarray}
From this and (\ref{up+1+}) we get
\begin{eqnarray}\label{}
\nonumber \int_{ \partial\mathbb R_+^{n+1}} |u_e|^{p+1} \eta^2 + \int_{ \mathbb R_+^{n+1}} y^b |\Delta_b u_e|^2 \eta^2 &\le & C  \int_{ \mathbb R_+^{n+1}} y^b |u_e| |\Delta_b u_e| |\nabla \eta|^2 \\&&+C \int_{ \mathbb R_+^{n+1}} y^b u_e^2 (|\Delta_b \eta|^2 + |\nabla\eta\cdot\nabla\Delta_b \eta| +|\Delta_b |\nabla\eta|^2| )
 \end{eqnarray}
This finishes the proof.
\end{proof}
 \begin{cor}\label{ue2} With the same assumption as Lemma \ref{estimate}. Then there exists a positive constant $C$  such that
  \begin{equation}
\int_{B_R\cap\partial\mathbb R_+^{n+1}}  |u_e|^{p+1} + \int_{B_R\cap\mathbb R_+^{n+1}}  y^b |\Delta_b u_e|^2  \le C R^{-4} \int_{B_R\cap\mathbb R_+^{n+1}} y^b u_e^2
  \end{equation}
\end{cor}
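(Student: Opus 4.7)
The plan is to apply Lemma \ref{estimate} with a carefully chosen radial cutoff $\eta$ adapted to the ball $B_R$ and then absorb the only term on the right-hand side that is linear in $\Delta_b u_e$ back into the left-hand side via Young's inequality.

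Specifically, I would fix once and for all a smooth scalar $\phi:[0,\infty)\to[0,1]$ with $\phi\equiv 1$ on $[0,1]$ and $\phi\equiv 0$ on $[2,\infty)$, and set $\eta(X):=\phi(|X|/R)^{2}$. Since $\eta$ is radial in $X$, the operator formula $\Delta_{b}\eta=\eta''+(n+b)\eta'/|X|$ combined with $|X|\ge R$ on $\operatorname{supp}\nabla\eta$ would yield the uniform pointwise bounds
$$|\nabla\eta|^{2}\le CR^{-2},\qquad |\Delta_{b}\eta|\le CR^{-2},\qquad |\Delta_{b}|\nabla\eta|^{2}|+|\nabla\eta\cdot\nabla\Delta_{b}\eta|\le CR^{-4}.$$
The decisive feature of the squared-cutoff choice $\eta=\phi^{2}$ is the additional identity $|\nabla\eta|^{2}=4\phi^{2}|\phi'|^{2}R^{-2}\le CR^{-2}\eta$, which is precisely what will permit the absorption step below.

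Next, I would feed this $\eta$ into the estimate \eqref{estimateeq}. The three zeroth-order-in-$u_{e}$ terms on the right are $O(R^{-4})$ pointwise by the bounds above, so together they contribute at most $CR^{-4}\int_{B_{2R}\cap\mathbb R^{n+1}_{+}}y^{b}u_{e}^{2}$. For the mixed term I would split
$$y^{b}|u_{e}||\Delta_{b}u_{e}||\nabla\eta|^{2}=\bigl(y^{b/2}|\Delta_{b}u_{e}|\,\eta\bigr)\bigl(y^{b/2}|u_{e}|\,|\nabla\eta|^{2}/\eta\bigr)$$
and apply Young's inequality together with the ratio bound $|\nabla\eta|^{2}/\eta\le CR^{-2}$ to obtain
$$C\int y^{b}|u_{e}||\Delta_{b}u_{e}||\nabla\eta|^{2}\le\tfrac{1}{2}\int y^{b}|\Delta_{b}u_{e}|^{2}\eta^{2}+CR^{-4}\int_{B_{2R}\cap\mathbb R^{n+1}_{+}}y^{b}u_{e}^{2}.$$
Absorbing the first summand into the left-hand side of \eqref{estimateeq} and using $\eta^{2}\equiv 1$ on $B_{R}$ would then yield the claim (with $B_{R}$ on the right-hand side harmlessly enlarged to $B_{2R}$, which can be rescaled away).

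The main obstacle, and the reason for the particular squared-cutoff choice, is the absorption step: Lemma \ref{estimate} produces exactly one copy of $\eta^{2}$ in front of $|\Delta_{b}u_{e}|^{2}$, and a naive linear cutoff would leave a term of order $R^{-2}|u_{e}||\Delta_{b}u_{e}|$ that cannot be reabsorbed. The structural bound $|\nabla\eta|^{2}\le C\eta/R^{2}$ forced by the $\phi^{2}$ construction repairs this gap, while the radial weighted structure of $\Delta_{b}$ is used only to check the routine $R^{-k}$ decay of the various derivatives of $\eta$. A minor technical point is that, since the stability inequality holds only outside $B_{R_{0}}$, the above is meaningful for $R\ge 2R_{0}$; the finitely many smaller radii are harmlessly absorbed into the constant $C$.
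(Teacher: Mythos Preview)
Your argument is correct and is essentially the paper's own proof: the paper substitutes $\eta\mapsto\eta^{m}$ with $m>3$ in Lemma~\ref{estimate} and then applies Young's inequality to absorb the mixed term $\int y^{b}|u_{e}||\Delta_{b}u_{e}||\nabla(\eta^{m})|^{2}$ into the left-hand side, which is exactly your squared-cutoff trick $\eta=\phi^{2}$ together with the ratio bound $|\nabla\eta|^{2}\le CR^{-2}\eta$. The only cosmetic difference is that you take the minimal exponent $m=2$ while the paper takes $m>3$; the mechanism and the resulting estimate are identical.
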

 \begin{proof}
This is a direct consequence of the  estimate (\ref{estimateeq}).  Substitute $\eta$ with $\eta^m$ in  (\ref{estimateeq}) for a number $3<m\in\mathbb N$. Therefore
 \begin{eqnarray}\label{}
 m^2 \int_{ \mathbb R_+^{n+1}} y^b |u_e| |\Delta_b u_e| |\nabla \eta|^2 \eta^{2m-2} \le \epsilon \int_{ \mathbb R_+^{n+1}} y^b |\Delta_b u_e|^2  w\eta^{2m} + C(\epsilon) \int_{ \mathbb R_+^{n+1}} y^b u_e^2 \eta^{2m-4} |\nabla \eta|^4
  \end{eqnarray}
  for a small enough $\epsilon>0$. One can apply the standard test function to finish the proof.
 \end{proof}
 \begin{lemma}\label{u2}
 Suppose that $u$ is a solution of  (\ref{main}) that is stable outside some ball $B_{R_0}\subset \mathbb R^n$. For $\eta\in C_c^\infty(\mathbb R^n\setminus \overline{B_{R_0}})$ and $x\in \mathbb R^n$ define
 \begin{equation}
 \rho(x)=\int_{\mathbb R^n} \frac{(\eta(x)-\eta(y))^2}{|x-y|^{n+2s}} dy.
 \end{equation}
 Then
 \begin{equation}
 \int_{\mathbb R^n} |u|^{p+1} \eta^2 dx +  \int_{\mathbb R^n }\int_{\mathbb R^n } \frac{| u(x)\eta(x)-u(y)\eta(y)|^2}{|x-y|^{n+2s}} dx dy \le C \int_{\mathbb R^n} u^2 \rho dx
 \end{equation}
 \end{lemma}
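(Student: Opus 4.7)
\textbf{Proof plan for Lemma \ref{u2}.} The plan is to combine the stability inequality (\ref{stability}) applied to $\phi = u\eta$ with the equation (\ref{main}) tested against $u\eta^2$, the two being linked by a purely algebraic identity for Gagliardo-type integrands. Since $\eta\in C_c^\infty(\mathbb{R}^n\setminus\overline{B_{R_0}})$, multiplying $(-\Delta)^s u = |u|^{p-1}u$ by $u\eta^2$ and symmetrizing the fractional Laplacian yields
\begin{equation*}
\int_{\mathbb{R}^n}|u|^{p+1}\eta^2\,dx = \frac{1}{2}\int_{\mathbb{R}^n}\int_{\mathbb{R}^n}\frac{(u(x)\eta^2(x)-u(y)\eta^2(y))(u(x)-u(y))}{|x-y|^{n+2s}}\,dx\,dy,
\end{equation*}
while the stability inequality, applied to $\phi = u\eta$ (admissible by density since it is supported in $\mathbb{R}^n\setminus\overline{B_{R_0}}$), reads
\begin{equation*}
p\int_{\mathbb{R}^n}|u|^{p+1}\eta^2\,dx \le \int_{\mathbb{R}^n}\int_{\mathbb{R}^n}\frac{(u(x)\eta(x)-u(y)\eta(y))^2}{|x-y|^{n+2s}}\,dx\,dy.
\end{equation*}

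To couple the two integrals, I would verify by direct expansion the pointwise algebraic identity
\begin{equation*}
(u(x)\eta(x)-u(y)\eta(y))^2 = (u(x)\eta^2(x)-u(y)\eta^2(y))(u(x)-u(y)) + u(x)u(y)(\eta(x)-\eta(y))^2,
\end{equation*}
which is the nonlocal analogue of the product-rule identity $|\nabla(u\eta)|^2 = \nabla u\cdot\nabla(u\eta^2) + u^2|\nabla\eta|^2$ used in the local case. Integrating against $|x-y|^{-(n+2s)}$ and substituting the equation identity above gives
\begin{equation*}
\int_{\mathbb{R}^n}\int_{\mathbb{R}^n}\frac{(u(x)\eta(x)-u(y)\eta(y))^2}{|x-y|^{n+2s}}\,dx\,dy = 2\int_{\mathbb{R}^n}|u|^{p+1}\eta^2\,dx + \int_{\mathbb{R}^n}\int_{\mathbb{R}^n}\frac{u(x)u(y)(\eta(x)-\eta(y))^2}{|x-y|^{n+2s}}\,dx\,dy.
\end{equation*}
The cross term on the right is then estimated by the pointwise bound $|u(x)u(y)|\le \tfrac{1}{2}(u(x)^2+u(y)^2)$ together with the symmetry of the kernel, which places its absolute value below $\int_{\mathbb{R}^n} u^2\rho\,dx$.

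Combining the stability bound with the two displayed equalities produces the absorption inequality
\begin{equation*}
(p-2)\int_{\mathbb{R}^n}|u|^{p+1}\eta^2\,dx \le \int_{\mathbb{R}^n} u^2\rho\,dx,
\end{equation*}
from which both terms on the left side of the lemma are controlled by $C(p)\int u^2\rho$ whenever $p>2$, with $C(p) = (p+1)/(p-2)$; plugging the resulting bound on $\int|u|^{p+1}\eta^2$ back into the identity closes the estimate on the Gagliardo seminorm. The main obstacle in this plan is the range $1<p\le 2$, where the direct absorption degenerates because the factor $2$ arising from the symmetrized equation overtakes the exponent $p$ coming from stability. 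To treat that regime I would either sharpen the test function in (\ref{stability}) (for instance $\phi = \mathrm{sign}(u)|u|^\alpha\eta$ with $\alpha$ tuned so that the effective exponent on the stability side strictly exceeds $2$), or pass to the extension picture and couple Lemma \ref{estimate} with a trace inequality translating the weighted fourth-order energy of $(u\eta)_e$ on $\mathbb{R}^{n+1}_+$ back into the Gagliardo seminorm of $u\eta$ on the boundary.
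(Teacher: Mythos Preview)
Your plan is exactly the argument of Lemma~2.1 in \cite{ddw}, which is what the paper cites in lieu of a proof: test the equation against $u\eta^2$, use the pointwise identity you wrote to express the Gagliardo seminorm of $u\eta$ as the tested equation plus the cross term, apply stability with $\phi=u\eta$, and bound the cross term by $\int u^2\rho$ via $2|u(x)u(y)|\le u(x)^2+u(y)^2$ and the symmetry of the kernel.

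The obstacle you flag for $1<p\le 2$ is a normalization artifact, not a genuine gap. You (correctly) acquire a factor $\tfrac12$ when symmetrizing the equation tested against $u\eta^2$, but then combine it with the stability inequality as displayed, which lacks the matching $\tfrac12$. Stability is nonnegativity of the second variation, and in the same normalization as the operator it reads
\[
\int_{\mathbb R^n}\phi\,(-\Delta)^s\phi\;=\;\tfrac12\iint_{\mathbb R^n\times\mathbb R^n}\frac{(\phi(x)-\phi(y))^2}{|x-y|^{n+2s}}\,dx\,dy\ \ge\ p\int_{\mathbb R^n}|u|^{p-1}\phi^2;
\]
this is the form used in \cite{ddw}. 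Rerunning your computation with consistent constants, the identity and the equation give $\int(u\eta)(-\Delta)^s(u\eta)=\int|u|^{p+1}\eta^2+\tfrac12\iint\frac{u(x)u(y)(\eta(x)-\eta(y))^2}{|x-y|^{n+2s}}$, so the absorption becomes
\[
(p-1)\int_{\mathbb R^n}|u|^{p+1}\eta^2\,dx\ \le\ \tfrac12\int_{\mathbb R^n} u^2\rho\,dx,
\]
valid for every $p>1$; back-substitution into the identity then bounds the Gagliardo seminorm of $u\eta$ as you indicate. The alternative devices you propose for the range $1<p\le2$ (power-type test functions, passage to the extension) are therefore unnecessary.
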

 \begin{proof}
 Proof is quite similar to Lemma 2.1 in \cite{ddw} and we omit it here.
  \end{proof}
 \begin{lemma}\label{rho}
 Let $m>n/2$ and $x\in\mathbb R^n$. Set
  \begin{equation}\label{eta}
 \rho(x)=\int_{\mathbb R^n} \frac{(\eta(x)-\eta(y))^2}{|x-y|^{n+2s}} dy \ \ \text{where} \ \ \eta(x)=(1+|x|^2)^{-m/2}
 \end{equation}
Then there is a constant $C=C(n,s,m)>0$ such that
\begin{equation}
 C^{-1} (1+|x|^2)^{-n/2-s}\le \rho(x)\le C (1+|x|^2)^{-n/2-s}
 \end{equation}
  \end{lemma}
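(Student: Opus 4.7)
The plan is a standard splitting-and-estimation argument. For $|x|=R \gg 1$, decompose $\mathbb R^n = A_1 \cup A_2 \cup A_3$ with
\[
A_1 = \{|y| \le R/2\},\qquad A_2 = \{|y-x|\le R/2\},\qquad A_3 = \mathbb R^n \setminus (A_1 \cup A_2),
\]
and handle each piece separately; for $|x|$ bounded, one notes that $\eta$ is smooth with all derivatives bounded, so $\rho(x)$ is continuous and bounded both above and below by positive constants, which matches $(1+|x|^2)^{-n/2-s} \asymp 1$ on that range.

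On $A_1$ one has $|x-y| \asymp R$, while $\eta(x) = O(R^{-m})$ is negligible compared to typical values of $\eta(y)$. Hence
\[
\int_{A_1}\frac{(\eta(x)-\eta(y))^2}{|x-y|^{n+2s}}\,dy \;\asymp\; R^{-n-2s}\int_{A_1}\eta(y)^2\,dy,
\]
and the integral $\int_{\mathbb R^n}(1+|y|^2)^{-m}\,dy$ is finite precisely because $m > n/2$. This contribution alone already yields the sharp lower bound $\rho(x) \gtrsim R^{-n-2s}$. On $A_3$, one uses $|y| \asymp |x-y| \gtrsim R$ together with $(\eta(x)-\eta(y))^2 \le 2\eta(x)^2 + 2\eta(y)^2$; the tail integral $\int_{|y|\ge R}(1+|y|^2)^{-m}|y|^{-n-2s}\,dy$ is controlled by $R^{-n-2s}$, again using $m>n/2$, which gives an upper bound of the correct order.

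On $A_2$ one uses the mean value theorem combined with the fact that $|\nabla \eta(\xi)| \lesssim R^{-m-1}$ for $|\xi|\asymp R$, so that $|\eta(x)-\eta(y)| \lesssim R^{-m-1}|x-y|$; polar integration over $|x-y|\le R/2$ yields a contribution of order $R^{-2m-2s}$, which is absorbed by $R^{-n-2s}$ thanks to the hypothesis $2m>n$. Combining the three pieces yields the two-sided bound $\rho(x) \asymp R^{-n-2s} \asymp (1+|x|^2)^{-n/2-s}$.

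The main obstacle lies in the $A_2$ estimate for $1<s<2$: the pointwise bound $|\eta(x)-\eta(y)|^2 \lesssim R^{-2m-2}|x-y|^2$ combined with the kernel $|x-y|^{-n-2s}$ produces the exponent $|x-y|^{2-n-2s}$, which is not Lebesgue-integrable near $y=x$ when $s\ge 1$. To repair this one exploits the odd-part cancellation of $\nabla\eta(x)\cdot(y-x)$ in the symmetrized integrand and uses a second-order Taylor remainder, so that effectively $(\eta(x)-\eta(y))^2$ is controlled by $\|D^2\eta\|^2 \, |x-y|^4 \lesssim R^{-2m-4}|x-y|^4$ on the near-diagonal scale; equivalently, one may re-read $\rho$ through the extension/Fourier characterization of the $H^s$-seminorm. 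Once this technicality is accepted, the $A_2$ piece becomes integrable and of lower order, and the rest of the proof is routine tail analysis entirely driven by the hypothesis $m>n/2$.
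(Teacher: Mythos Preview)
The paper does not actually prove this lemma: it simply writes ``Proof is quite similar to Lemma~2.2 in \cite{ddw} and we omit it here,'' where \cite{ddw} treats the range $0<s<1$. Your splitting $A_1\cup A_2\cup A_3$ is precisely the argument in \cite{ddw}, and for $0<s<1$ your sketch is correct and complete.

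The genuine issue is the one you yourself flag in your last paragraph, but your proposed repair does not work. The integrand
\[
\frac{(\eta(x)-\eta(y))^2}{|x-y|^{n+2s}}
\]
is \emph{non-negative}, and its leading behaviour near $y=x$ is
\[
\frac{(\nabla\eta(x)\cdot(y-x))^2}{|x-y|^{n+2s}}\ \gtrsim\ \frac{c(x)}{|x-y|^{n+2s-2}}\qquad(\text{after angular averaging, with }c(x)>0\text{ for }x\neq 0).
\]
There is no ``odd-part cancellation'' available here: the odd part of $\eta(x)-\eta(y)$ has been squared, and principal-value symmetrisation cannot remove a non-negative divergent integrand. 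Consequently, for $1<s<2$ and every $x\neq 0$ one has $\rho(x)=+\infty$, so the two-sided bound $\rho(x)\asymp(1+|x|^2)^{-n/2-s}$ is literally false with this definition of $\rho$. Your alternative suggestion---reinterpreting $\rho$ through the Fourier/extension characterisation of the $H^s$-seminorm, or equivalently replacing the first-order difference by a second-order one---does lead to a finite quantity with the stated decay, but that is a change of \emph{definition}, not of proof, and it must be made explicit (together with the corresponding modification of Lemma~\ref{u2}) before the argument can proceed.
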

 \begin{proof}
 Proof is quite similar to Lemma 2.2 in \cite{ddw} and we omit it here.
  \end{proof}

\begin{cor}\label{rhoR}
Suppose that $m>n/2$, $\eta$ given by (\ref{eta}) and $R>R_0>1$. Define
\begin{equation}\label{etaR}
 \rho_R(x)=\int_{\mathbb R^n} \frac{(\eta_R(x)-\eta_R(y))^2}{|x-y|^{n+2s}} dy \ \
 \text{where} \ \ \eta_R(x)=\eta(x/R) \psi(x/{R_0})
 \end{equation}
for the standard test function $\psi $ that is $\psi\in C^\infty(\mathbb R^n)$ and $0\le \psi\le 1$, $\psi=0$ on $B_1$ and $\psi=1$ on $\mathbb R^n\setminus B_2$. Then there exists a constant $C>0$ such that
$$\rho_R(x)\le C \eta^2(x/R) |x|^{-(n+2s)}+R^{-2s}\rho(x/R).$$
\end{cor}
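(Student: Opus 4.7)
The plan is to exploit the product structure $\eta_R = \eta(\cdot/R)\psi(\cdot/R_0)$ and split
\begin{equation*}
\eta_R(x) - \eta_R(y) = \psi(y/R_0)\bigl[\eta(x/R) - \eta(y/R)\bigr] + \eta(x/R)\bigl[\psi(x/R_0) - \psi(y/R_0)\bigr],
\end{equation*}
after which $(a+b)^2 \le 2a^2 + 2b^2$ breaks $\rho_R(x)$ into two pieces. Since $|\psi|\le 1$, the contribution from the first piece is bounded by $2\int \frac{[\eta(x/R) - \eta(y/R)]^2}{|x-y|^{n+2s}}\,dy$, and the substitution $y = Rz$ turns this into $2R^{-2s}\rho(x/R)$, matching the second term on the right-hand side of the claim.

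The contribution from the second piece equals $2\eta^2(x/R)\,I(x)$, where
\begin{equation*}
I(x) := \int_{\mathbb R^n} \frac{[\psi(x/R_0) - \psi(y/R_0)]^2}{|x-y|^{n+2s}}\,dy.
\end{equation*}
The task reduces to verifying $I(x) \le C|x|^{-(n+2s)}$ with a constant $C$ depending on $R_0$. I would treat two regimes separately. For $|x| > 3R_0$, one has $\psi(x/R_0) = 1$, so the integrand is supported in $|y|\le 2R_0$; on this support $|x-y|\ge |x|/3$, which yields $I(x) \le CR_0^n|x|^{-(n+2s)}$. For $|x|\le 3R_0$, the factor $|x|^{-(n+2s)}$ is bounded below by $(3R_0)^{-(n+2s)}$, so it suffices to bound $I(x)$ by a constant depending only on $R_0$. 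Such a uniform bound comes from the smoothness of $\psi$ and follows the same scheme that makes $\rho$ pointwise finite in Lemma \ref{rho}, combining the Lipschitz estimate $|\psi(x/R_0) - \psi(y/R_0)| \le \|\nabla\psi\|_\infty R_0^{-1}|x-y|$ at short distance with the decay of the kernel $|x-y|^{-(n+2s)}$ at long distance.

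The main obstacle is precisely the uniform bound on $I(x)$ in the transition annulus $R_0 \le |x| \le 3R_0$, where both $x$ and $y$ can sit in the support of $\nabla[\psi(\cdot/R_0)]$. This situation, however, is exactly that of Lemma \ref{rho}, only with the slowly decaying profile $\eta$ replaced by the sharply cut-off $\psi$, which is strictly more favourable; the remaining work is bookkeeping of the same flavour as in \cite{ddw}. Assembling the two contributions gives the claimed estimate
$\rho_R(x)\le C\eta^2(x/R)|x|^{-(n+2s)}+R^{-2s}\rho(x/R)$.
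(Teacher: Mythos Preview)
The paper states this corollary without proof, implicitly deferring to the analogous computation in \cite{ddw}. Your product splitting $\eta_R(x)-\eta_R(y)=\psi(y/R_0)[\eta(x/R)-\eta(y/R)]+\eta(x/R)[\psi(x/R_0)-\psi(y/R_0)]$ followed by $(a+b)^2\le 2a^2+2b^2$ is exactly the natural route; the scaling that yields $R^{-2s}\rho(x/R)$ and the far-field bound $I(x)\le CR_0^{n}|x|^{-(n+2s)}$ for $|x|>3R_0$ are both correct.

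There is one genuine concern in the regime $1<s<2$ treated in this paper. For $|x|\le 3R_0$ you control $I(x)$ via the Lipschitz estimate $|\psi(x/R_0)-\psi(y/R_0)|\le \|\nabla\psi\|_\infty R_0^{-1}|x-y|$, giving a near-diagonal integrand of order $|x-y|^{2-n-2s}$; but $\int_0^1 r^{1-2s}\,dr$ diverges precisely when $s>1$, so first-order control does not produce the uniform bound you claim. The same divergence in fact afflicts $\rho$ itself at any point where $\nabla\eta\ne0$, so this is not a flaw peculiar to your argument: Lemma~\ref{rho} and the present corollary are imported verbatim from \cite{ddw} (the case $0<s<1$), and the paper gives no indication of how the short-range singularity is to be handled for $1<s<2$. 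Within the paper's framework your reduction to ``the same scheme as Lemma~\ref{rho}'' is the appropriate move, but the explicit Lipschitz justification you write out is valid only for $s<1$ and should not be presented as the mechanism here.
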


\begin{lemma}\label{finalu2}
Suppose that $u$ is a solution of (\ref{main}) that is stable outside a ball $B_{R_{0}}$. Consider $\rho_R$ that is defined in Corollary \ref{} for $n/2<m<n/2+s(p+1)/2$. Then there exists a constant $C>0$ such that
$$ \int_{\mathbb R^n} u^2 \rho_R \le C \left(\int_{B_{3R_0}} u^2 \rho_R + R^{n-2s \frac{p+1}{p-1}}\right)$$
for any $R>3R_0$
\end{lemma}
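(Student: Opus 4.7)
The plan is to combine the stability-based estimate of Lemma \ref{u2} with a H\"older--Young absorption argument, following the strategy of \cite{ddww,ddw}. Since $\psi(\cdot/R_0)$ vanishes on $B_{R_0}$, the cutoff $\eta_R$ is supported in $\mathbb{R}^n\setminus\overline{B_{R_0}}$ (and can be approximated in the standard way by compactly supported smooth test functions), so Lemma \ref{u2} applies and yields
\[
 \int_{\mathbb{R}^n}|u|^{p+1}\eta_R^2 \le C\int_{\mathbb{R}^n} u^2\rho_R.
\]

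I would first split $\int u^2\rho_R = \int_{B_{3R_0}} u^2\rho_R + \int_{\mathbb{R}^n\setminus B_{3R_0}} u^2\rho_R$. On the outer region, where $\eta_R(x)=\eta(x/R)>0$, I factor $u^2\rho_R = (u^2\eta_R^{4/(p+1)})(\rho_R\eta_R^{-4/(p+1)})$ and apply H\"older with conjugate exponents $(p+1)/2$ and $(p+1)/(p-1)$, obtaining
\[
\int_{\mathbb{R}^n\setminus B_{3R_0}} u^2\rho_R \le \Bigl(\int |u|^{p+1}\eta_R^2\Bigr)^{2/(p+1)} A^{(p-1)/(p+1)}, \quad A:=\int_{\mathbb{R}^n\setminus B_{3R_0}} \rho_R^{(p+1)/(p-1)}\eta_R^{-4/(p-1)}.
\]
Plugging the bound from Lemma \ref{u2} into the first factor and then using Young's inequality with small $\epsilon$ produces $\int_{\mathbb{R}^n\setminus B_{3R_0}} u^2\rho_R \le \tfrac12\int u^2\rho_R + CA$, which, after absorbing the $\tfrac12$ term into the split, gives $\int u^2\rho_R \le C\bigl(\int_{B_{3R_0}} u^2\rho_R + A\bigr)$.

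It remains to show $A \le C R^{n-2s(p+1)/(p-1)}$, and this is the main technical step. By Corollary \ref{rhoR} and Lemma \ref{rho} one has
\[
 \rho_R(x) \le C\,\eta^2(x/R)|x|^{-(n+2s)} + C R^{-2s}(1+|x/R|^2)^{-(n+2s)/2},
\]
together with $\eta_R(x)^{-4/(p-1)} \le C(1+|x/R|^2)^{2m/(p-1)}$ on the region of integration. The change of variable $y=x/R$ scales the measure by $R^n$ and both pieces of $\rho_R^{(p+1)/(p-1)}$ by $R^{-2s(p+1)/(p-1)}$, producing the prefactor $R^{n-2s(p+1)/(p-1)}$. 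The remaining dimensionless integral over $|y|\ge 3R_0/R$ decays at infinity like $|y|^{-\alpha}$ with $\alpha = (n+2s)(p+1)/(p-1) - 4m/(p-1)$, and the hypothesis $m<n/2+s(p+1)/2$ is precisely $\alpha>n$, ensuring convergence.

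The hard part is this verification that $A$ scales cleanly as $R^{n-2s(p+1)/(p-1)}$: the weight $\rho_R$ has different dominant regimes near $|x|\sim R_0$, $|x|\sim R$, and $|x|\to\infty$, and one must track that none of the three contributions exceeds this power of $R$. The range $n/2 < m < n/2 + s(p+1)/2$ is forced precisely by this balance, the lower bound coming from the pointwise control of $\rho$ supplied by Lemma \ref{rho} and the upper bound being the decay threshold just described.
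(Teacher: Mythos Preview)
Your proposal is correct and follows exactly the approach of Lemma 2.4 in \cite{ddw}, which is precisely what the paper invokes: the paper omits the proof entirely, stating only that it is ``quite similar to Lemma 2.4 in \cite{ddw}.'' The H\"older/Young absorption using Lemma \ref{u2}, together with the scaling analysis of $A=\int \rho_R^{(p+1)/(p-1)}\eta_R^{-4/(p-1)}$ via Corollary \ref{rhoR} and Lemma \ref{rho}, is the intended argument.
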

 \begin{proof}
 Proof is quite similar to Lemma 2.4 in \cite{ddw} and we omit it here.
  \end{proof}

\begin{lemma}\label{finalue2}
Suppose that $p\neq \frac{n+2s}{n-2s}$. Let $u$ be a solution of (\ref{main}) that is stable outside a ball $B_{R_0}$ and $u_e$ satisfies (\ref{maine}). Then there exists a constant $C>0$ such that
$$ \int_{B_R} y^b u_e^2 \le C R^{n+4-2s \frac{p+1}{p-1}}$$ for any $R>3R_0$.
\end{lemma}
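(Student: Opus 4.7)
The plan is to promote the boundary $L^2$-type estimate for $u$ furnished by Lemma \ref{finalu2} to a weighted $L^2$ estimate for the extension $u_e$ in the half-space, by combining a weighted Hardy inequality with a Caccioppoli-type bound for $\nabla u_e$. Write $I(R):=\int_{B_R^+} y^b u_e^2$ and $\alpha := n+4-2s(p+1)/(p-1)=n+1+b-4s/(p-1)$; the latter is the scaling-invariant exponent for $I$ under the rescaling $u_e^\lambda(X)=\lambda^{2s/(p-1)} u_e(\lambda X)$. The first step is to apply Lemma \ref{finalu2} with $m\in(n/2,\,n/2+s(p+1)/2)$, an interval that is nonempty exactly because $p\neq (n+2s)/(n-2s)$. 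Combined with Lemma \ref{rho} (which gives $\rho_R(x)\gtrsim R^{-n-2s}$ for $|x|\sim R$), this extracts the boundary bound
\[
\int_{B_R\cap\mathbb{R}^n} u^2\,dx \le C R^{n-4s/(p-1)}\qquad (R>3R_0).
\]

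Second, since $s>1$ implies $b=3-2s<1$, writing $w(x,y)=w(x,0)+\int_0^y\partial_y w(x,t)\,dt$ and applying Cauchy--Schwarz with the splitting $t^{b/2}\cdot t^{-b/2}$, then Fubini, yields the weighted Hardy inequality
\[
\int_{B_R^+} y^b w^2 \le C R^{1+b}\!\int_{B_R\cap\mathbb{R}^n} w(x,0)^2\,dx + C R^2 \int_{B_R^+} y^b |\partial_y w|^2.
\]
Third, to control the gradient term with $w=u_e$, I multiply the identity $\Delta_b u_e=v_e$ by $y^b u_e\eta^2$ (where $\eta$ is a smooth cutoff equal to $1$ on $B_R$ and vanishing outside $B_{2R}$) and integrate by parts. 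The boundary contribution at $y=0$ drops out by the condition $\lim_{y\to 0}y^b\partial_y u_e=0$. Cauchy--Schwarz plus absorption, together with Corollary \ref{ue2} to dispose of the remaining $\int y^b|\Delta_b u_e|^2$, leads to
\[
\int_{B_R^+} y^b|\nabla u_e|^2 \le C R^{-2}\, I(4R).
\]

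Combining these three steps produces the one-step recursion $I(R) \le C R^\alpha + \theta\, I(4R)$ for some constant $\theta>0$, and setting $J(R):=I(R)R^{-\alpha}$ turns this into $J(R)\le C + \theta 4^\alpha\, J(4R)$. Iteration then yields $I(R)\le C R^\alpha$ as desired, \emph{provided} $\theta\,4^\alpha<1$. The \textbf{main obstacle} is precisely to arrange $\theta$ small enough for the iteration to close. The standard fix, as in the analogous arguments of \cite{ddww,ddw}, is to refine the cutoffs in the Hardy and Caccioppoli steps above so that they transition over a thin annulus $B_{(1+\delta)R}\setminus B_R$ rather than $B_{2R}\setminus B_R$ (introducing $\delta^{-c}$ factors in $\theta$), and then to choose $\delta$ optimally against the scaling power $4^\alpha$. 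The hypothesis $p\neq(n+2s)/(n-2s)$ enters both at the boundary $L^2$ step and in ensuring that $\alpha$ stays away from the degenerate value at which the optimization would fail.
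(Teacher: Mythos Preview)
Your recursion scheme does not close, and the thin-annulus fix you propose cannot repair it. In the inequality $I(R)\le CR^\alpha+\theta\,I(KR)$ the constant $\theta$ contains two contributions that carry no small parameter: the Hardy constant $(1-b)^{-1}$, which comes from integrating in the $y$-direction and is untouched by any cutoff in $x$, and the contribution of the cross term $\int y^b v_e u_e\eta^2$ in your Caccioppoli step, which after Cauchy--Schwarz and Corollary~\ref{ue2} produces a fixed $O(1)$ multiple of $R^{-2}I(2R)$ regardless of the width of the annulus supporting $\nabla\eta$. Shrinking the annulus therefore only affects the $u_e^2|\nabla\eta|^2$ term (making it \emph{worse} by $\delta^{-2}$), while the ratio $K$ can at best drop from $4$ to $2$ because of the fixed doubling hidden in Corollary~\ref{ue2}. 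Optimizing over $\delta$ leaves $\theta_\delta K_\delta^\alpha$ bounded below by a constant depending only on $n,s,p$; since $\alpha>4$ in the supercritical regime there is no reason for this constant to lie below $1$. Without an independent a~priori bound on $I$ at large scales---which is precisely what you are trying to prove---the iteration yields nothing. Your reference to \cite{ddww,ddw} for this device is also off: in \cite{ddww} the problem is local (no extension), and in \cite{ddw} the corresponding step is handled by the Poisson formula, not by an absorption argument.

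The paper avoids any recursion by exploiting the explicit Poisson representation for Yang's extension,
\[
u_e(x,y)=c_{n,s}\int_{\mathbb{R}^n}\frac{y^{2s}}{(|x-z|^2+y^2)^{(n+2s)/2}}\,u(z)\,dz,
\]
so that by Jensen $u_e^2(x,y)\le c_{n,s}\int\frac{y^{2s}}{(|x-z|^2+y^2)^{(n+2s)/2}}\,u^2(z)\,dz$. One integrates $y^bu_e^2$ over $B_R$, performs the $y$-integral explicitly, and splits the remaining $(x,z)$-integral according to $|x-z|\lessgtr 2R$. The near part is bounded using $\int_{B_{3R}}u^2\le CR^{n-4s/(p-1)}$ via Lemmas~\ref{u2} and~\ref{finalu2}; the far part is absorbed into $\int u^2\rho_R$ via Corollary~\ref{rhoR} and Lemma~\ref{finalu2}. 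This gives $I(R)\le CR^\alpha$ in one step. The missing ingredient in your plan is precisely this Poisson formula, which transfers the weighted interior $L^2$ bound to the boundary directly rather than through a self-referential bootstrap.
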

 \begin{proof}
  The extension $u_e$ satisfies
  $$ \bar u(x,y) \le C_{n,s} \int_{\mathbb R^n} u^2(z) \frac{y^{2s}}{(|x-z|^2+y^2)^{\frac{n+2s}{2}}} dz$$
  From this we have
  \begin{eqnarray*}\label{}
 \int_{B_R} y^{3-2s } u_e^2 dx dy &\le& C_{n,s} \int_{|x|\le R, z\in\mathbb R^n}  u_e^2(z) \left(\int_0^R  \frac{y^3}{(|x-z|^2+y^2)^{\frac{n+2s}{2}}}dy \right)
 dz dx
 \\&\le & C_{n,s}  \int_{|x|\le R, z\in\mathbb R^n}  u_e^2(z) \left[\int_0^{R^2} (|x-z|^2+\alpha^2)^{1-\frac{n}{2}-s} d\alpha  - |x-z|^2 \int_0^{R^2} (|x-z|^2+\alpha^2)^{-\frac{n}{2}-s} d\alpha \right]
 \\&=& C_{n,s}  \int_{|x|\le R, z\in\mathbb R^n}  u_e^2(z)  (-2+\frac{n}{2}+s)^{-1} \left[ (|x-z|^2)^{2-\frac{n}{2}-s}-(|x-z|^2+R^2)^{2-\frac{n}{2}-s}    d\alpha \right]
\\&&+C_{n,s}  \int_{|x|\le R, z\in\mathbb R^n}  u_e^2(z) |x-z|^2  (\frac{n}{2}+s-1)^{-1} \left[(|x-z|^2+R^2)^{1-\frac{n}{2}-s}   -   (|x-z|^2)^{2-\frac{n}{2}-s} d\alpha \right]
   \end{eqnarray*}\label{}
 We now split the integral to $|x-z|<2R$ and $|x-z|>2R$. For the case of $|x-z|<2R$ we get

   \begin{eqnarray*}\label{}
&&   \int_{|x|\le R, |x-z|<2R}  u_e^2(z)  (-2+\frac{n}{2}+s)^{-1} \left[ (|x-z|^2)^{2-\frac{n}{2}-s}-(|x-z|^2+R^2)^{2-\frac{n}{2}-s}     \right]
\\&&+   \int_{|x|\le R, |x-z|<2R}  u_e^2(z) |x-z|^2  (\frac{n}{2}+s-1)^{-1} \left[(|x-z|^2+R^2)^{1-\frac{n}{2}-s}   -   (|x-z|^2)^{2-\frac{n}{2}-s}  \right]
\\&\le & C  \int_{|x|\le R, |x-z|<2R}  u_e^2(z)   (|x-z|^2)^{2-\frac{n}{2}-s}
\\&\le& R^{4-2s} \int_{B_{3R}}  u_e^2(z) dz  \le C R^{4-2s} \left(\int_{B_{3R}} |u|^{p+1} \eta_R^2\right)^{2/(p+1)} \left(\int_{B_{3R}}  \eta_R^{-4/(p-1)}   \right)^{(p-1)/(p+1)}
 \\&\le& C R^{4-2s+ n \frac{p-1}{p+1}}  \left(\int_{B_{3R}} u^2(z) \rho_R(z) dz \right)^{2/(p+1)}
\\&\le& C R^{n+4-2s\frac{p+1}{p-1}}
  \end{eqnarray*}\label{}
   Here we have used Lemma \ref{u2} and Lemma \ref{finalu2}. For the case of $|x-z|>2R$ we apply the mean value inequality to get
  \begin{eqnarray*}\label{}
&&   \int_{|x|\le R, |x-z|\ge2R}  u_e^2(z)  (-2+\frac{n}{2}+s)^{-1} \left[ (|x-z|^2)^{2-\frac{n}{2}-s}-(|x-z|^2+R^2)^{2-\frac{n}{2}-s}     \right]
\\&&+   \int_{|x|\le R, |x-z|\ge 2R}  u_e^2(z) |x-z|^2  (\frac{n}{2}+s-1)^{-1} \left[(|x-z|^2+R^2)^{1-\frac{n}{2}-s}   -   (|x-z|^2)^{2-\frac{n}{2}-s}  \right]
\\&\le& C R^4    \int_{|x|\le R, |x-z|\ge 2R}  u_e^2(z)   (|x-z|^2)^{-\frac{n}{2}-s}
\\&\le& C R^4    \int_{|z|\ge R}  u_e^2(z)   \rho dz
 \\&\le& C R^{n+4-2s\frac{p+1}{p-1}}.
     \end{eqnarray*}\label{}
Here we have used Corollary \ref{rhoR}   and Lemma \ref{finalu2}.  This finishes the proof.

  \end{proof}

\begin{lemma}\label{lowest} Let $u$ be a solution of (\ref{main}) that is stable outside a ball $B_{R_0}$ and $u_e$ satisfies (\ref{maine}). Then there exists a positive constant $C$  such that
 \begin{equation}
\int_{B_R\cap\partial\mathbb R_+^{n+1}}  |u_e|^{p+1} + \int_{B_R\cap\mathbb R_+^{n+1}}  y^b |\Delta_b u_e|^2   \le C R^{n-2s \frac{p+1}{p-1}}
  \end{equation}
\end{lemma}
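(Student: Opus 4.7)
The plan is to observe that this lemma is an immediate consequence of the two preceding results, obtained by feeding the $L^2$-weighted bound of Lemma \ref{finalue2} into the weighted Caccioppoli-type estimate of Corollary \ref{ue2}. There is essentially no new content; the role of this lemma is to package the optimal scaling bound that will be needed later for the blow-down analysis.

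Concretely, I would proceed as follows. First, take the standard cutoff $\eta_R(X) = \eta(X/R)$ where $\eta \in C_c^\infty(B_2)$ satisfies $0 \le \eta \le 1$ and $\eta \equiv 1$ on $B_1$. Then $\eta_R \equiv 1$ on $B_R$, $\operatorname{supp}\eta_R \subset B_{2R}$, and the scaling gives $|\nabla \eta_R| \lesssim R^{-1}$, $|\Delta_b \eta_R| \lesssim R^{-2}$, $|\Delta_b |\nabla \eta_R|^2| \lesssim R^{-4}$, and $|\nabla \eta_R \cdot \nabla \Delta_b \eta_R| \lesssim R^{-4}$. Plugging this $\eta_R$ into Lemma \ref{estimate} and estimating the cross term $|u_e||\Delta_b u_e||\nabla \eta_R|^2$ by Cauchy--Schwarz (absorbing the $|\Delta_b u_e|^2$ piece into the left-hand side, exactly as in the proof of Corollary \ref{ue2}) yields
\begin{equation*}
\int_{B_R\cap\partial\mathbb R_+^{n+1}}  |u_e|^{p+1} + \int_{B_R\cap\mathbb R_+^{n+1}}  y^b |\Delta_b u_e|^2  \le C R^{-4} \int_{B_{2R}\cap\mathbb R_+^{n+1}} y^b u_e^2.
\end{equation*}

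Next, assuming $R > 3R_0$ (so that $2R > 3R_0$ as well), Lemma \ref{finalue2} applied on the ball $B_{2R}$ bounds the right-hand side by
\begin{equation*}
C R^{-4} \cdot (2R)^{n+4-2s\frac{p+1}{p-1}} \le C R^{n-2s\frac{p+1}{p-1}},
\end{equation*}
which is exactly the desired estimate. For the remaining range $R_0 < R \le 3R_0$, the inequality is trivial after enlarging the constant $C$, since the left-hand side is a locally bounded quantity on the fixed compact set $\overline{B_{3R_0}}$.

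There is no genuine obstacle: the only mild subtlety is matching radii between Corollary \ref{ue2} (whose proof uses a cutoff supported in a slightly larger ball than its stated domain of integration) and Lemma \ref{finalue2}, which is resolved by the scale factor $2$ absorbed into the constant $C$. The assumption $p \neq \frac{n+2s}{n-2s}$ is inherited from Lemma \ref{finalue2} and is implicit in our setting.
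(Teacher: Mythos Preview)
Your proposal is correct and follows essentially the same approach as the paper: the authors simply state that the lemma is a direct consequence of Corollary~\ref{ue2} and Lemma~\ref{finalue2}, and your write-up just spells out that combination in detail.
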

\begin{proof}
This is   a direct consequence of Corollary \ref{ue2} and Lemma \ref{finalue2}.
\end{proof}
 \section{Blow-Down Analysis}
In this section we provide the proof of Theorem \ref{mainthm}.
\\
\\\noindent{\it Proof of Theorem \ref{mainthm}}. Suppose that $u$ is a solution of (\ref{main}) that is stable outside the ball of radius $R_0$ and suppose that $u_e$ is its extension satisfying (\ref{maine}).

Let's first consider the subcritical case, i.e. $1<p\le p_S(n)$. Note that for the subcritical case Lemma implies that $u\in \dot H ^s(\mathbb R^n)\cap L^{p+1}(\mathbb R^n)$. Multiplying (\ref{main}) with $u$ and doing integration, we obtain
\begin{equation}\label{poho1}\int_{\mathbb R^n} |u|^{p+1} = || u||^2_{\dot H^s(\mathbb R^n)}
\end{equation}
in addition multiplying (\ref{main}) with $u^\lambda(x)=u(\lambda x)$ yields
$$ \int_{\mathbb R^n} |u|^{p-1} u^\lambda = \int_{\mathbb R^n} (-\Delta )^{s/2} u (-\Delta)^{s/2} u^\lambda=\lambda^s \int_{\mathbb R^n} w w_\lambda$$ where $w=(-\Delta )^{s/2} u$.  Following ideas provided in \cite{ddww,rs} and the using the change of variable $z=\sqrt\lambda x$ one can get the following Pohozaev identity
$$ -\frac{n}{p+1} \int_{\mathbb R^n} |u|^{p+1}= \frac{2s-n}{2} \int_{\mathbb R^n} w^2+ \frac{d}{d\lambda}\vert_{\lambda=1} \int_{\mathbb R^n} w^{\sqrt \lambda} w^{1/\sqrt\lambda} dz=\frac{2s-n}{2} ||u||^2_{\dot H^s(\mathbb R^n)}$$
This equality together and (\ref{poho1}) proves the theorem for the subcritical case.

We now focus on the supercritical case, i.e. $p> p_S(n)$. We perform the proof in a few steps.
\\
\noindent {\bf Step 1.} $\lim_{\lambda\to\infty} E(u_e,0,\lambda)<\infty$.

From Theorem \ref{mono} $E$ is nondecreasing. So, we only need to show that $E(u_e,0,\lambda)$ is bounded.  Note that
$$E(u_e,0,\lambda) \le \frac{1}{\lambda} \int_\lambda^{2\lambda}  E(u_e,0,t) dt  \le \frac{1}{\lambda^2} \int_\lambda^{2\lambda} \int_t^{t+\lambda} E(u_e,0,\gamma) d\gamma dt $$
From Lemma \ref{lowest} we conclude that
$$ \frac{1}{\lambda^2} \int_\lambda^{2\lambda} \int_t^{t+\lambda}   \gamma^{2s\frac{p+1}{p-1}-n} \left(   \int_{  \mathbb{R}^{n+1}_{+}\cap B_\gamma} \frac{1}{2} y^{3-2s}|\Delta_b u_e|^2 dy dx-  \frac{C_{n,s}}{p+1} \int_{  \partial\mathbb{R}^{n+1}_{+}\cap B_\gamma} u_e^{p+1}  dx \right)  d\gamma dt \le C$$
where $C>0$ is independent from $\lambda$. For the next term in the energy we have
\begin{eqnarray*}\label{}
\frac{1}{\lambda^2} \int_\lambda^{2\lambda} \int_t^{t+\lambda}  \left(\gamma^{-3+2s+\frac{4s}{p-1}-n}  \int_{  \mathbb{R}^{n+1}_{+}\cap \partial B_\gamma} y^{3-2s} u_e^2 dydx\right)d\gamma dt
&\le&\frac{1}{\lambda^2} \int_\lambda^{2\lambda} t^{-3+2s+\frac{4s}{p-1}-n} \int_{B_{t+\lambda}\setminus B_t}   y^{3-2s} u_e^2 dydx  dt
\\&\le& \frac{1}{\lambda^2} \int_\lambda^{2\lambda} t^{-3+2s+\frac{4s}{p-1}-n}\left( \int_{B_{3\lambda}}   y^{3-2s} u_e^2 dydx \right) dt
\\&\le& \lambda^{n+4-2s\frac{p+1}{p-1}}\frac{1}{\lambda^2} \int_\lambda^{2\lambda} t^{-3+2s+\frac{4s}{p-1}-n} dt
\\&\le& C
\end{eqnarray*}
 where $C>0$ is independent from $\lambda$. In the above estimates we have applied  Lemma \ref{finalue2}.  For the next term we have
 \begin{eqnarray*}\label{}
&&\frac{1}{\lambda^2} \int_\lambda^{2\lambda} \int_t^{t+\lambda} \frac{\gamma^3}{2} \frac{d}{d \gamma} \left[ \gamma^{2s-3-n+\frac{4s}{p-1}} \int_{\partial B_\gamma} y^{3-2s} \left( \frac{2s}{p-1} \gamma^{-1} u_e+\frac{\partial u_e}{\partial r} \right)^2  \right] d\gamma dt
\\&=& \frac{1}{2\lambda^2} \int_\lambda^{2\lambda} [(t+\lambda)^{2s-n+\frac{4s}{p-1}} \int_{\partial B_{t+\lambda}} y^{3-2s} \left( \frac{2s}{p-1} (t+\lambda)^{-1} u_e+\frac{\partial u_e}{\partial r} \right)^2
\\&&
- t^{2s-n+\frac{4s}{p-1}}  \int_{\partial B_{\lambda}} y^{3-2s} \left( \frac{2s}{p-1} \gamma^{-1} u_e+\frac{\partial u_e}{\partial r} \right)^2 ] dt
\\&&-\frac{3}{2\lambda^2} \int_\lambda^{2\lambda} \int_t^{t+\lambda}\left[ \gamma^{2s-1-n+\frac{4s}{p-1}}  \int_{\partial B_{\gamma}}  y^{3-2s} \left( \frac{2s}{p-1} \gamma^{-1} u_e+\frac{\partial u_e}{\partial r} \right)^2\right] d\gamma dt
\\&\le& \lambda^{-2+2s -n +\frac{4s}{p-1}} \int_{B_{3\lambda}\setminus B_\lambda} y^{3-2s} \left( \frac{2s}{p-1} \lambda^{-1} u_e+\frac{\partial u_e}{\partial r} \right)^2 \le C
 \end{eqnarray*}
where $C>0$ is independent from $\lambda$. The rest of the terms can be treated similarly.
 \\
 \noindent {\bf Step 2.}  There exists a sequence $\lambda_i\to\infty$ such that $(u_e^{\lambda_i})$ converges weakly in $H^1_{loc}(\mathbb R^n, y^{3-2s} dxdy)$ to a function $u_e^\infty$.

 Note that this is a direct consequence of Lemma \ref{lowest}.
 \\
 \noindent {\bf Step 3.} $u_e^\infty$ is homogeneous.

To prove this claim, apply the scale invariance of $E$, its finiteness and the monotonicity formula; given $R_2>R_1>0$,
  \begin{eqnarray*}\label{}
  0&=& \lim_{i\to\infty} \left(E(u_e,0,R_2\lambda_i)- E(u_e,0,R_1\lambda_i) \right)
\\&=&    \lim_{i\to\infty} \left(E(u_e^{\lambda_i},0,R_2)- E(u_e^{\lambda_i},0,R_1) \right)
\\&\ge&  \liminf_{i\to\infty} \int_{(B_{R_2} \setminus B_{R_1})\cap \mathbb R^{n+1}_+} y^{3-2s}  r^{\frac{4s}{p-1}+2s-2-n}   \left(  \frac{2s}{p-1} r^{-1} u_e^{\lambda_i}+ \frac{\partial u_e^{\lambda_i}}{\partial r}\right)^2 dy dx
\\&\ge&  \int_{(B_{R_2} \setminus B_{R_1})\cap \mathbb R^{n+1}_+} y^{3-2s}  r^{\frac{4s}{p-1}+2s-2-n}   \left(  \frac{2s}{p-1} r^{-1} u_e^{\infty}+ \frac{\partial u_e^{\infty}}{\partial r}\right)^2 dy dx
   \end{eqnarray*}
In the last inequality we have used the weak convergence of $(u_e^{\lambda_i})$ to $u_e^\infty$ in $H^1_{loc}(\mathbb R^n,y^{3-2s} dydx)$. This implies
$$  \frac{2s}{p-1} r^{-1} u_e^{\infty}+ \frac{\partial u_e^{\infty}}{\partial r}=0 \ \ \text{a.e. \ \ in} \ \ \mathbb R_+^{n+1}.$$
  Therefore, $u_e^\infty$ is homogeneous.
  \\
 \noindent {\bf Step 4.} $u_e^\infty=0$.

This is a direct consequence of Theorem \ref{homog}.
    \\
 \noindent {\bf Step 5.} $(u_e^{\lambda_i})$ converges strongly to zero in $H^1(B_R\setminus B_\epsilon, y^{3-2s} dydx)$ and $(u_e^{\lambda_i})$ converges strongly to zero in $L^{p+1}(B_R\setminus B_\epsilon)$ for all $R>\epsilon>0$.
\\\noindent {\bf Step 6.} $u_e\equiv0$.
\begin{eqnarray*}
I( u_e,\lambda) &=& I( u_e^\lambda,1)
\\&=&
\frac{1}{2}\int_{\r\cap B_{1}} y^{3-2s}\vert\Delta_b u_e^\lambda\vert^2  dxdy -\frac{\kappa_{s}}{p+1} \int_{\br\cap B_{1}} \vert u_e^\lambda\vert^{p+1}dx
\\&=&\frac{1}{2} \int_{\r\cap B_{\epsilon}} y^{3-2s} \vert \Delta_b u_e^\lambda\vert^2 dx dy - \frac{\kappa_{s}}{p+1} \int_{\br\cap B_{\epsilon}}  \vert u_e^\lambda\vert^{p+1}dx
\\&&+\frac{1}{2} \int_{\r\cap B_{1}\setminus B_{\epsilon}} y^{3-2s} \vert\Delta_b u_e^\lambda\vert^2 dx dy - \frac{\kappa_{s}}{p+1} \int_{\br\cap B_{1}\setminus B_{\epsilon}}  \vert u_e^\lambda\vert^{p+1}dx
\\&=&\eps^{n-\frac{2s(p+1)}{p-1}} I(u_e,\lambda\eps) +\frac{1}{2} \int_{\r\cap B_{1}\setminus B_{\epsilon}} y^{3-2s} \vert\Delta_b u_e^\lambda\vert^2 dx dy -\frac{\kappa_{s}}{p+1} \int_{\br\cap B_{1}\setminus B_{\epsilon}} \vert  u_e^\lambda\vert^{p+1}dx\\
&\le& C\eps^{n-\frac{2s(p+1)}{p-1}} + \frac{1}{2} \int_{\r\cap B_{1}\setminus B_{\epsilon}} y^{3-2s} \vert\Delta u_e^\lambda\vert^2 dx dy - \frac{\kappa_{s}}{p+1} \int_{\br\cap B_{1}\setminus B_{\epsilon}}   \vert  u_e^\lambda\vert^{p+1}dx
\end{eqnarray*}
Letting $\lambda\to+\infty$ and then $\eps\to0$, we deduce that
$
\lim_{\lambda\to+\infty} I( u_e,\lambda) =0.
$
Using the monotonicity of $E$,
\begin{equation}
E(u_e,\lambda) \le \frac1\lambda\int_{\lambda}^{2\lambda}E(t)\;dt\le \sup_{[\lambda,2\lambda]}I + C\lambda^{-n-1+\frac{2s(p+1)}{p-1}}\int_{B_{2\lambda}\setminus B_{\lambda}}u_e^2\\
\end{equation}
and so
$
\lim_{\lambda\to+\infty}E(u_e,\lambda) =0.$ Since $u$ is smooth, we also have $E(u_e,0)=0$. Since $E$ is monotone, $E\equiv 0$ and so $\bar u$ must be homogeneous, a contradiction unless $u_e\equiv0$.

\begin{remark} Note that we expect that when (\ref{conditionp}) does not hold that is when 
\begin{equation}\label{ffff}
p \frac{\Gamma(\frac{n}{2}-\frac{s}{p-1}) \Gamma(s+\frac{s}{p-1})}{\Gamma(\frac{s}{p-1}) \Gamma(\frac{n-2s}{2}-\frac{s}{p-1})} \le \frac{ \Gamma(\frac{n+2s}{4})^2 }{\Gamma(\frac{n-2s}{4})^2}
\end{equation}
there exist radial entire stable solutions.   The method of construction of such solutions is the one that is applied in \cite{ddw} and references therein.   More precisely, one needs to mimic the standard proof for the existence of a  minimal solution that is axially symmetric for the associated problem on bounded domains.  Then applying the truncation method and the moving plane method one can show that the minimal solution is bounded and radially decreasing.  From elliptic estimates and some classical convexity arguments  the minimal solution would converge to the singular solution that is stable.  This implies that (\ref{ffff}) should hold.  Finally using the singular solution and the minimal solution one can construct a radial, bounded and smooth solution via rescaling arguments.  

\end{remark}

\end{document}